\newtheorem{theorem}{Theorem}[section]
\newtheorem{lemma}[theorem]{Lemma}
\newtheorem{remark}[theorem]{Remark}
\newtheorem{claim}[theorem]{Claim}
\newtheorem{definition}[theorem]{Definition}
\newtheorem{corollary}[theorem]{Corollary}
\newtheorem{question}{Question}
\title[Prime Ends Dynamics in Parametrised Families]{Prime Ends Dynamics in Parametrised Families of Rotational Attractors}
\begin{document}
\maketitle

\begin{center}Jan P. Boro\'nski\footnote{National Supercomputing Centre IT4Innovations, Division of the University of Ostrava, Institute for Research and Applications of Fuzzy Modeling, 30. dubna 22, 70103 Ostrava, Czech Republic
e-mail: jan.boronski@osu.cz.}, 
Jernej \v Cin\v c\footnote{National Supercomputing Centre IT4Innovations, Division of the University of Ostrava, Institute for Research and Applications of Fuzzy Modeling, 30. dubna 22, 70103 Ostrava, Czech Republic
	-- and --AGH University of Science and Technology, Faculty of Applied Mathematics, al. Mickiewicza 30, 30-059 Krak\'ow, Poland.
e-mail: jernej.cinc@osu.cz.}, and
Xiao-Chuan Liu\footnote{Instituto de Matem\'atica e Estat\'istica da Universidade de S\~ao Paulo, R. do Mat\~ao, 1010 - Vila Universitaria, S\~ao Paulo, Brasil.
e-mail: lxc1984@gmail.com.}
\end{center}

\begin{abstract}
We provide several new examples in dynamics on the $2$-sphere,	with the emphasis on better understanding the induced boundary dynamics of invariant domains in parametrized families.
First, motivated by a topological version of the Poincaré-Bendixson Theorem obtained recently by Koropecki and Passeggi, we show the existence of homeomorphisms of $\mathbb{S}^2$ with Lakes of Wada rotational attractors, with an arbitrarily large number of complementary domains, and with or without fixed points, that are arbitrarily close to the identity. This answers a question of Le Roux.
Second, from reduced Arnold's family we construct a parametrised family of Birkhoff-like cofrontier attractors, where at least for uncountably many choices of the parameters, two distinct irrational prime ends rotation numbers are induced from the two complementary domains. This example complements the resolution of Walker's Conjecture by Koropecki, Le Calvez and Nassiri from 2015.
Third, answering a question of Boyland, we show that there exists a non-transitive Birkhoff-like attracting cofrontier which is obtained from a BBM embedding of inverse limit of circles, such that the interior prime ends rotation number belongs to the interior of the rotation interval of the cofrontier dynamics. There exists another BBM embedding of the same attractor so that the two induced prime ends rotation numbers are exactly the two endpoints of the rotation interval.
\end{abstract}

\section{Introduction}
The prime ends rotation number induced by surface homeomorphisms restricted to an invariant disk
is one of the important invariants  
in the study of boundary dynamics. 
Parametrised families of dynamical systems can provide 
a clearer view of both the surface dynamics and the boundary dynamics in many situations. 
In this paper, our study serves as a contribution in this direction, 
  by providing new  examples in various interesting contexts.  
  Let us postpone to Subsection 2.2 for 
  several standard terminologies needed below.

We were
initially motivated by 
the following recent result, 
referred to as 
topological version of the Poincar\'e-Bendixson Theorem. 
\vspace{0.25cm}

\begin{quote}
\textbf{Theorem A. (Koropecki and Passeggi, \cite{Korop})}
\textit{ Let $\Gamma$ be 
a translation line for an orientation-preserving homeomorphism of 
$\mathbb{S}^2$. 
Then either the omega limit set 
$\omega(\Gamma)$ contains a fixed point, or
 $\Gamma$ 
is a topologically embedded line,
and its filled $\omega$-limit set $\widetilde{\omega}(\Gamma)$ is a rotational attractor,
 disjoint from $\Gamma$.}
 \end{quote}

 In connection with Theorem A, 
 we will construct a parametrised family of 
 Lakes of Wada continua, which are 
 the omega limit sets of translation lines, 
 and whose filled continua are rotational attractors.
  We will also study the corresponding 
 exterior prime ends rotation numbers. The approach we take is the so-called 
Brown-Barge-Martin (BBM) embeddings
of inverse limits of topological graphs
(see~\cite{BM} and~\cite{Brown}).
This tool has proven to play an important role in constructing  
surprising new examples for topological dynamical systems. 
A particularly useful extension of this 
method is provided by 
the parametrised version of BBM embedding, 
proved recently by Boyland, 
de Carvalho and Hall \cite{BCH}.  This generalized version makes it possible
 to know precisely how 
 in a family of maps, the rotation set changes. The same authors used this method as a tool 
 to find new rotation sets for torus homeomorphisms (see \cite{Boyland_New}) as well as to study prime ends of natural extensions of unimodal maps (see \cite{Boy}).
 However, for our purposes in this paper, some adaptations of this technique is necessary. 
 
\subsection{Statements of the Results}
   A {\em continuum} is a compact and connected metric space. A continuum $X$ in the sphere $ \mathbb{S}^2$ (or the plane) is said to be {\em $n$-separating} if $ \mathbb{S}^2\setminus X$ has $n$ connected components. A {\em Lakes of Wada continuum} is an $n$-separating continuum in $ \mathbb{S}^2$ 
  which is the common boundary of each of the $n>2$ components of its complement.  
  There are well known examples of attractors that are Lakes of Wada
  continua arising as projections of DA-attractors from the torus onto the sphere, as well as those constructed directly on the sphere by Plykin~\cite{Plykin}. 
  Other examples of Lakes of Wada continua,
  as well as their relations with physical phenomena,
  were given in \cite{KY} and bifurcations of basins of attraction from the view point of prime ends were studied for planar diffeomorphisms in \cite{NY}. Lakes of Wada property for trapping regions was studied in \cite{NYScience}. In the present paper the study of Lakes of Wada attractors is motivated by Theorem A,
 which in turn can be viewed as a particular generalization of the classical Poincar\'e-Bendixon Theorem.
  In view of Theorem A, during the conference \textit{Surfaces in Luminy}, 
held at 
 Centre International de Rencontres Math\'ematiques, October 3 - 7, 2016, 
 Fr\'ed\'eric Le Roux asked 
 if the boundary of the rotational attractor $\widetilde{\omega}(\Gamma)$ 
 could be a Lakes of Wada continuum. 
 Possible existence of such examples was 
 conjectured in~\cite{Korop}.
 Clearly, in order to construct such an example, it is necessary to 
 understand
 the corresponding exterior prime ends dynamics better. 
However, the literature suggests that the prime ends rotation numbers for Lakes of Wada attractors
 have not been systematically studied yet. Another feature is as follows. The planar attractors we obtain in 
 Theorem~\ref{parametrised} and Theorem~\ref{thm:noFP}  
 share the Lakes of Wada topology with the Plykin attractors. 
 However, our attractors arise for dynamical systems which are arbitrarily small 
 perturbations of identity, and they are not expansive (see Remark~\ref{nonexpansive} for details), contrasting with expansivity of the Plykin attractors resulting from the global stretching and bending on the sphere. Up to our knowledge, ours is the first example of such phenomena in close vicinity of the identity.
 
 Let $d_H$ denote the Hausdorff distance 
between two compact subsets of $\mathbb{S}^2$, and let $d_{C^0}$ denote the usual $C^0$ topology in the space of 
homeomorphisms of some metric space. 
The following is the main result 
of this paper.

\begin{theorem}\label{parametrised}
For each $n> 2$, there exists a family of homeomorphisms 
$\Phi_{\epsilon}: \mathbb{S}^2 \to  \mathbb{S}^2$, 
and a family of $\Phi_\epsilon$-invariant continua $K_\epsilon$, 
where $\epsilon\in (0,\epsilon_0]$ for some $\epsilon_0>0$,  such that:
\begin{enumerate}[(i)]
    \item $\underset{\delta\to 0}{\lim} d_H(K_{\epsilon},K_{\epsilon+\delta}) =0$, 
    for each $\epsilon\in(0,\epsilon_0]$.
    \item 
     the external prime ends rotation numbers, denoted by
      $\rho_{\text{ex}}(\Phi_{\epsilon},K_{\epsilon})$,  
        is a strictly increasing function of $\epsilon$. Moreover, 
    \begin{equation}
    \underset{\epsilon\to 0} {\lim} \rho_{\text{ex}}(\Phi_{\epsilon},K_{\epsilon})= 0.
    \end{equation}
      \item for any  $\epsilon\in (0,\epsilon_0]$, the continuum
       $K_\epsilon$ is an
      $n$-separating Lakes of Wada continuum, 
      which is also a rotational attractor.   
       \item $\underset{\epsilon\to 0}{\lim} d_{C^0}(\Phi_{\epsilon},id) = 0$.
    \item the topological entropy
    $h_{\text{top}} (\Phi_{\epsilon}|_{K_{\epsilon}})$ 
    is strictly decreasing as $\epsilon\to 0$, and 
    \begin{equation}
    \lim_{\epsilon\to 0}h_{\text{top}} ( \Phi_{\epsilon}|_{K_{\epsilon}} ) =0.
    \end{equation}
    \item for every $\epsilon\in(0,\epsilon_0]$,
    there exists a translation line 
    $\underline \Gamma\subset \mathbb{S}^2$ 
    so that $\omega(\underline \Gamma)=K_{\epsilon}$.
    The filled omega limit of $\underline \Gamma$,
    denoted 
    by $\widetilde{\omega}(\underline \Gamma)$,
    is a rotational attractor, which is  disjoint from $\underline \Gamma$. 
\end{enumerate}
\end{theorem}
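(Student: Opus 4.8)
The plan is to realise each $K_{\epsilon}$ as a BBM embedding of an inverse limit $\varprojlim(G,f_{\epsilon})$ of a single fixed topological graph $G$ under a carefully designed parametrised family of graph maps $f_{\epsilon}$, and then to read off all six properties from the combinatorics and dynamics of $f_{\epsilon}$ together with the features of the parametrised BBM embedding of Boyland--de Carvalho--Hall. Accordingly, the first step is to fix the model. To accommodate the $n$-separating Lakes of Wada topology together with a single well-defined external rotation, I would take $G$ to carry a $\mathbb{Z}/n$ symmetry: a graph whose complement in $\mathbb{S}^2$ already has $n$ faces (for instance a generalised theta-graph, or a core circle $C$ with $n$ symmetric decorations) responsible for the separation, where $C$ also governs the rotation. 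The family $f_{\epsilon}$ would be chosen so that (a) it commutes with the order-$n$ rotation of $G$; (b) the induced degree-one circle map $f_{\epsilon}|_{C}$ is of Arnold type, with a well-defined rotation number $\tau(\epsilon)$ that is strictly increasing in $\epsilon$ and tends to $0$; (c) its topological entropy $h_{\text{top}}(f_{\epsilon})$, supplied by the folding of $f_{\epsilon}$, is strictly decreasing with $\epsilon\to 0$; and (d) $f_{\epsilon}\to\mathrm{id}_{G}$ uniformly as $\epsilon\to 0$. Items (b)--(d) are arranged by explicit one-parameter interpolations and are routine once the model is fixed.

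Applying the parametrised BBM theorem then yields homeomorphisms $\Phi_{\epsilon}$ of $\mathbb{S}^2$ with invariant attractors $K_{\epsilon}$ on which $\Phi_{\epsilon}$ is conjugate to the shift $\hat f_{\epsilon}$. Several items follow almost formally from this. Property (v) holds because $h_{\text{top}}(\Phi_{\epsilon}|_{K_{\epsilon}})=h_{\text{top}}(\hat f_{\epsilon})=h_{\text{top}}(f_{\epsilon})$, which is strictly decreasing to $0$ by construction. Property (iv) follows because the embedding depends continuously on the base map and $f_{\epsilon}\to\mathrm{id}$ forces $\Phi_{\epsilon}\to\mathrm{id}$ in $d_{C^0}$; here one must make sure the embedding is carried out in a fixed disk so that the perturbation is genuinely small. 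Property (i) follows from continuity of $\epsilon\mapsto f_{\epsilon}$ together with the parametrised embedding, which gives Hausdorff continuity of $\epsilon\mapsto K_{\epsilon}$.

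The substantive work is property (iii) and the rotation number in (ii). For the Lakes of Wada property I would show that the inverse limit of the separating part already cuts $\mathbb{S}^2$ into the expected faces, and that the folding makes the resulting continuum indecomposable with every complementary domain accumulating on all of $K_{\epsilon}$; the $\mathbb{Z}/n$ symmetry then forces exactly $n$ complementary domains, each having $K_{\epsilon}$ as its full boundary, which is the defining condition. For (ii) I would identify the prime ends circle of a complementary domain with the inverse limit of $C$ and show that the induced circle homeomorphism is semiconjugate to $f_{\epsilon}|_{C}$, so that $\rho_{\text{ex}}(\Phi_{\epsilon},K_{\epsilon})=\tau(\epsilon)$, with monotonicity and the limit $0$ inherited from (b). I expect this identification to be the main obstacle: prime ends are sensitive to the fine geometry of the embedding, so one must verify that the BBM channels are built so that the accessible prime ends are organised exactly by $C$ and that the folding responsible for the entropy introduces no additional rotation. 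In other words, the crux is engineering the adaptation of BBM so that the Lakes of Wada topology and the exact value of the external prime ends rotation number hold simultaneously.

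Finally, for (vi) I would produce the translation line directly rather than merely invoking Theorem A. Taking a point $x$ in the basin of attraction of $K_{\epsilon}$ outside $K_{\epsilon}$ and joining $x$ to $\Phi_{\epsilon}(x)$ by an arc in the basin avoiding $K_{\epsilon}$, the full $\Phi_{\epsilon}$-orbit of this arc is a properly embedded translation line $\underline\Gamma$; choosing $x$ accessible through the Wada channels and using that $K_{\epsilon}$ is the attractor, one gets $\omega(\underline\Gamma)=K_{\epsilon}$. Its filled omega limit $\widetilde{\omega}(\underline\Gamma)$ is then the rotational attractor, and disjointness from $\underline\Gamma$ holds because $\underline\Gamma$ lies in the basin and only accumulates on $K_{\epsilon}$. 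This realises precisely the second alternative of Theorem A, which here serves as a consistency check rather than an input.
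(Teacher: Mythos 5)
Your overall strategy (a parametrised BBM embedding of inverse limits of a fixed graph under a family of graph maps, reading off entropy, Hausdorff continuity and $C^0$-closeness from the base family, and building the translation line from an orbit of arcs in the basin) is the same as the paper's, and items (i), (iv), (v), (vi) are handled essentially as there. But there are two genuine gaps. First, your model is in tension with itself: you want an invariant core circle $C$ on which $f_{\epsilon}$ restricts to an Arnold-type map with \emph{a single well-defined rotation number} $\tau(\epsilon)$, yet the Lakes of Wada property forces every complementary domain of $K_{\epsilon}$ to accumulate on all of $K_{\epsilon}$, which requires the graph map to fold every edge over the whole graph; if $C$ is genuinely invariant and $f_{\epsilon}|_{C}$ is injective, then $\varprojlim(C,f_{\epsilon}|_{C})$ is an invariant circle inside $K_{\epsilon}$ and the decorations cannot accumulate on it from all sides. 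The paper avoids this by taking $G$ to be a chain of circles and $\phi_{\epsilon}$ a piecewise uniformly expanding map that wraps each circle over the whole graph; the circle that ``governs the rotation'' is not a subgraph but the boundary of the filled union of the circles, on which the induced map is a degree-one endomorphism with two turns whose rotation set is a nondegenerate \emph{interval}, not a point.

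Second, and more seriously, the step you yourself flag as ``the main obstacle'' -- identifying the external prime ends rotation number -- is exactly the technical heart of the paper and is left unresolved in your proposal. There is no semiconjugacy argument in the paper, and since the relevant boundary endomorphism has a nondegenerate rotation interval, one must determine \emph{which} number in that interval the prime ends dynamics selects. The paper does this by (a) designing the unwrapping $\overline{\phi}_{\epsilon}$ so that radial arcs over an ``efficient climbing interval'' $J_{\epsilon}$ are mapped into radial arcs, (b) proving an auxiliary lemma on endomorphisms with two turns producing a point of $J_{\epsilon}$ with a full backward orbit in translates of $J_{\epsilon}$ whose forward and backward rotation numbers both equal $\sup\rho$, so that the corresponding inverse-limit point is accessible by an arc of radial arcs, and (c) invoking Hern\'andez-Corbato's theorem that the rotation number of such an accessible point equals the prime ends rotation number. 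Without some version of (a)--(c) (or an alternative mechanism controlling accessible points), the claim $\rho_{\text{ex}}(\Phi_{\epsilon},K_{\epsilon})=\tau(\epsilon)$ and hence item (ii) does not follow; monotonicity and the limit $0$ then also remain unproved.
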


The continua $K_{\epsilon}$ 
from Theorem~\ref{parametrised} have a $\Phi_{\epsilon}$-fixed point.
However, one can also obtain 
examples without fixed points, 
as we will show in the next theorem, 
thus providing examples for both cases stated in Theorem A. For the purpose of the discussion that follows Theorem~\ref{thm:noFP}, we state this next result on the plane. 

\begin{theorem}\label{thm:noFP}
For any $n\geq 2$, there exists a $2n$-separating 
Lakes of Wada rotational attractor $K$, and a homeomorphism 
$\Phi:\mathbb{R}^2\to\mathbb{R}^2$ such that $\Phi|_{K}$ is fixed-point 
free. There exists a translation line $\Gamma\subset \mathbb{R}^2$ so that $\omega(\Gamma)=K$ and $\widetilde{\omega}(\Gamma)$ is a rotational attractor disjoint from $\Gamma$.
\end{theorem}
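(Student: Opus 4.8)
The plan is to reuse the Brown--Barge--Martin (BBM) machinery behind Theorem~\ref{parametrised}, but to replace the base graph map by a \emph{fixed-point free} one so that the induced attractor dynamics inherits this property. Concretely, I would fix a finite topological graph $G$ together with a self-map $f\colon G\to G$ whose inverse limit $\varprojlim(G,f)$, once embedded, is a $2n$-separating Lakes of Wada continuum. The key observation is that the natural extension (shift homeomorphism) $\hat f$ of $f$ has a fixed point precisely when $f$ does: a constant sequence at a fixed point of $f$ is the only way a point can be shift-invariant. Hence, if $f$ is chosen fixed-point free, then the BBM homeomorphism $\Phi\colon\mathbb{R}^2\to\mathbb{R}^2$, whose restriction to the embedded copy $K$ of $\varprojlim(G,f)$ is conjugate to $\hat f$, satisfies that $\Phi|_{K}$ is fixed-point free. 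To make $f$ both fixed-point free and to force nonzero rotation, I would build $G$ around a core circle carrying irrational-rotation-like behaviour, and arrange the remaining combinatorics of $f$ so that the folding produces exactly $2n$ complementary domains in the embedding.

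For the Lakes of Wada conclusion I would verify the common-boundary property directly from the BBM construction, showing that each of the $2n$ complementary domains is a basin component whose accessible boundary is dense, so that a standard accessibility argument forces the whole continuum $K$ to be the boundary $\partial$ of every domain. The multiple connectivity of the complement is exactly what makes fixed-point-freeness on $K$ possible: the trapping region for $K$ must be chosen multiply connected, excluding the repelling fixed points that sit inside the bounded lakes, whereas if it were a disk then Brouwer's theorem would place a fixed point inside the maximal invariant set $K$. That $K$ is moreover a \emph{rotational} attractor then follows, as in Theorem~\ref{parametrised}, by computing the prime ends rotation number on the unbounded complementary domain and checking it is nonzero (indeed irrational), consistently with the fixed-point freeness.

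It remains to produce the translation line. Since $\Phi$ is free on a neighbourhood of $K$ inside the basin, I would pick a point $x$ in the unbounded complementary domain and a translation arc $\gamma$ from $x$ to $\Phi(x)$ meeting its image only at the common endpoint; then $\Gamma=\bigcup_{k\in\mathbb{Z}}\Phi^{k}(\gamma)$ is a properly embedded translation line. Because $x$ lies in the basin of $K$, its forward trajectory accumulates on $K$, and the recurrence supplied by the irrational rotation yields $\omega(\Gamma)=K$. The filled omega-limit $\widetilde\omega(\Gamma)$ is obtained by filling in the bounded lakes, so it is an invariant, attracting, rotational continuum; and since $\Gamma$ is confined to the unbounded domain and merely spirals towards $K$, it never enters the filled region, giving $\widetilde\omega(\Gamma)\cap\Gamma=\emptyset$. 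This places the example in the second alternative of Theorem A, complementing the fixed-point case realised by Theorem~\ref{parametrised}.

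I expect the main obstacle to be the simultaneous reconciliation of three competing demands: $f$ must fold $G$ enough to generate $2n$ complementary domains that all share the entire continuum as their boundary, yet remain fixed-point free, and the resulting attractor must admit a basin whose topology hosts a translation line disjoint from $\widetilde\omega(\Gamma)$. Verifying the Wada common-boundary property for all $2n$ domains while keeping the rotation irrational is the delicate combinatorial core; the fixed-point-free requirement, although easy to impose on $f$ itself, constrains these combinatorics and must be kept compatible with the BBM embedding throughout.
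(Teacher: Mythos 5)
Your overall strategy is the one the paper uses: build the attractor as a BBM embedding of $\varprojlim(G,f)$ for a fixed-point-free graph map $f$, note that the shift homeomorphism is fixed-point free exactly when $f$ is, get the Wada property from the inverse-limit structure, read off the prime ends rotation number from accessible points, and take a translation line in the unbounded basin component. But the proposal has a genuine gap precisely where you flag ``the delicate combinatorial core'': you never produce the graph $G$ and the map $f$, and that construction \emph{is} the proof. The paper's key device is to take $G_{2n-1}$ a chain of $2n-1$ circles inside $D_{2n-1}$ (so the complement of the attractor has $2n-1$ bounded lakes plus the unbounded domain, which is why the statement reads ``$2n$-separating'' --- the parity is forced by the construction, and your sketch does not account for it), and to set $g=R\circ\xi$, where $\xi$ is a piecewise uniformly scaling map of the chain and $R$ is the rotation by $\pi$ about the centre of the middle circle, which reverses the chain. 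The odd length of the chain makes $R$ swap the outer circles in pairs while acting antipodally on the middle one; composed with $\xi$ this kills all fixed points of $g$ while creating an explicit $4$-periodic orbit $\{q_1,q_2,q_3,q_4\}$ whose points are accessible (by attaching radial arcs exactly as in Lemma~\ref{claim1}), so Lemma~\ref{Luis_lemma} gives $\rho_{\text{ex}}(\Phi,K)=\tfrac34 \bmod \mathbb{Z}\neq 0$ immediately.

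Two further points. First, your insistence on an \emph{irrational} rotation number is both unnecessary and counterproductive: fixed-point freeness of $\Phi|_K$ only requires $f$ to have no fixed points, which is perfectly compatible with a rational, non-integer prime ends rotation number carried by a periodic orbit; verifying an irrational value via Lemma~\ref{Luis_lemma} would instead force you through the climbing-interval machinery of Lemma~\ref{one_turn} and a separate argument that the relevant endpoint of the rotation set is irrational --- none of which you supply. Second, the aside that a disk trapping region would ``place a fixed point inside $K$'' by Brouwer is not quite right: Brouwer (or Cartwright--Littlewood) places a fixed point in the \emph{filled} attractor, and indeed in the paper's example the fixed point of the sphere extension sits in the invariant middle lake, not in $K$; the multiply connected trapping region is used for the BBM unwrapping, not to evade Brouwer. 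The translation-line step is fine in outline and matches the paper's argument (iterating an arc $\lambda\subset U_{\text{out}}$ joining a point to its preimage and passing to the inverse limit), though ``recurrence supplied by the irrational rotation'' is not what gives $\omega(\Gamma)=K$; that follows from the inverse-limit description of the basin component $L_{\text{out}}$.
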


We are in an unfortunate position to have to point out an error in a research announcement \cite{Brechner}, where in the Introduction it is stated, that corollaries of methods proposed in \cite{Brechner} imply the following: every homeomorphism of the Lakes of Wada continuum, that is extendable to the plane, must have a fixed point in the composant accessible from the unbounded complementary domain. Theorem~\ref{thm:noFP} implies that there exist homeomorphisms of the Lakes of Wada continua that are extendable to the plane where no composants contain any fixed points of the homeomorphism. Therefore, it follows that composants which are accessible from the unbounded complementary domain have no fixed point of the homeomorphism as well, implying an error in the results announced in \cite{Brechner}.

We now turn our attention to another class of examples. 
Suppose some orientation-preserving homeomorphism on $ \mathbb{S}^2$  admits an invariant separating continuum, which is 
a common boundary of two complementary domains (such a continuum is called a {\em cofrontier}). 
This provides another interesting context, where 
prime ends rotation number comes into play.
A conjecture of 
Walker~\cite{Walker} stated that
it is not possible for
the induced dynamics on 
the two prime ends circles to be
conjugate to two rigid rotations with different irrational rotation numbers. 
Recently, this conjecture 
was proved by 
Koropecki, Le Calvez and Nassiri (see Theorem F of \cite{KLCN}). 
 However, in the spirit of this conjecture, it is still interesting to find out if it is possible that two primes ends rotation numbers are two different irrationals. The following result gives an affirmative answer to this question through a family of examples. To state the result, assuming that we are working with invariant cofrontiers, in what follows let 
 $\rho_{\text{in}}$ 
 denote the prime ends rotation number of the induced dynamics on 
 the interior circle of prime ends, defined analogously as in the exterior prime ends rotation number. 
  A \emph{Birkhoff-like attractor} 
 for an orientation-preserving homeomorphism $g:\Bbb S^2 \to \Bbb S^2$ 
 is an attractor 
 $\Lambda\subset  \mathbb{S}^2$ 
 such that the rotation set $\rho(g|\Lambda)$ is a nondegenerate interval. 

\begin{theorem}\label{pseudo-circles}
There exists a parametrised 
family of homeomorphisms $\{\Phi_t\}_{t\in I}: \mathbb{S}^2\to  \mathbb{S}^2$, which is parametrised by 
a closed interval $I$, such that
each $\Phi_t$
preserves a
cofrontier attractor $K_t\subset  \mathbb{S}^2$, with the following properties.
\begin{enumerate}[(i)]
    \item $K_t$ is a Birkhoff-like attractor for $\Phi_t$, for each $t\in I$,
    \item  for each $t \in I$ and for each $\epsilon>0$,
     there exists a sphere homeomorphism $\Phi'_t$
    which preserves a 
    Birkhoff-like attractor $K'_t$ homeomorphic to the pseudo-circle, 
    such that $d_H(K_t, K'_t)<\epsilon$, and $d_{C^0}(\Phi_t,\Phi'_t)<\epsilon$
    \item there are uncountably many choices 
    of the parameter 
    $t \in I$, 
    such that the lifted interior and exterior prime ends rotation numbers 
    $\rho_{\text{ex}}(\widetilde \Phi_t,K_t)=
    \eta_t\neq\eta'_t=
    \rho_{\text{in}}(\widetilde \Phi_t,K_t)$, 
    with $\eta_t,\eta'_t\notin \mathbb{Q}$.  
\end{enumerate}
\end{theorem}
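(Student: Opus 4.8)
\emph{Proof proposal.}

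The plan is to realise each $K_t$ as a Brown--Barge--Martin (BBM) embedding of an inverse limit of circles whose bonding maps come from the reduced Arnold family, and then to read off both the rotation set and the two prime ends rotation numbers directly from the circle dynamics. Concretely, I would take a one-parameter family of degree-one circle endomorphisms $f_t\colon S^1\to S^1$ from the (reduced) Arnold family, chosen in the non-invertible regime so that each $f_t$ has a nondegenerate rotation interval $[\rho_-(t),\rho_+(t)]$, and form the inverse limit $\varprojlim(S^1,f_t)$ together with its shift (natural extension) $\hat f_t$. Applying the parametrised BBM embedding of Boyland--de Carvalho--Hall \cite{BCH} then yields a family $\Phi_t\colon\mathbb{S}^2\to\mathbb{S}^2$ and invariant continua $K_t$ on which $\Phi_t$ is conjugate to $\hat f_t$, with $K_t$ a cofrontier attractor separating $\mathbb{S}^2$ into two invariant disks. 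Since the natural extension preserves the rotation set, one gets
\[
\rho(\Phi_t|_{K_t})=[\rho_-(t),\rho_+(t)],
\]
a nondegenerate interval, which establishes (i).

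For (ii), I would exploit the classical realisation of the pseudo-circle as an inverse limit of circles under sufficiently \emph{crooked} degree-one bonding maps. The idea is to perturb each $f_t$ in the $C^0$ topology to a map $f'_t$ whose inverse limit is hereditarily indecomposable (hence homeomorphic to the pseudo-circle), while keeping the rotation interval unchanged and $f'_t$ uniformly close to $f_t$. Because the BBM construction depends continuously on the bonding data, the induced sphere homeomorphism $\Phi'_t$ and attractor $K'_t$ then satisfy $d_{C^0}(\Phi_t,\Phi'_t)<\epsilon$ and $d_H(K_t,K'_t)<\epsilon$. The content of this step is the simultaneous control of crookedness and rotational behaviour: one must insert enough folding to force hereditary indecomposability without perturbing the prescribed rotation interval.

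The heart of the argument, and the step I expect to be the main obstacle, is (iii): identifying the interior and exterior prime ends rotation numbers with the two endpoints of the rotation interval, so that $\rho_{\mathrm{in}}(\widetilde\Phi_t,K_t)=\rho_-(t)$ and $\rho_{\mathrm{ex}}(\widetilde\Phi_t,K_t)=\rho_+(t)$ (up to orientation and choice of lift). After the embedding, $K_t$ carries two prime ends circles, and I would argue that the orbits realising the extremal rotation numbers $\rho_\pm(t)$ of $f_t$ correspond, through the inverse limit and the BBM embedding, precisely to the points of $K_t$ accessible from the two complementary domains. This demands a careful analysis of the accessible-point structure of the embedded cofrontier, showing that the ``upper'' and ``lower'' dynamics of the endomorphism are exactly what is seen from the outer and inner sides; this is the same circle of ideas that underlies the prime ends computations of \cite{Boy}, adapted here to the cofrontier setting.

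Finally, to produce uncountably many admissible parameters I would arrange the family so that $\rho_-$ and $\rho_+$ are continuous, monotone, and uniformly separated, $\rho_+(t)-\rho_-(t)\ge c>0$ for all $t\in I$. By the devil's staircase structure of the Arnold family, each endpoint function is rational only on its mode-locking plateaus, which are at most countably many intervals accumulating only at rational values; arranging the total length of all plateaus of $\rho_-$ and $\rho_+$ to be strictly less than $|I|$ forces the set
\[
\{\,t\in I : \rho_-(t)\notin\mathbb{Q}\ \text{and}\ \rho_+(t)\notin\mathbb{Q}\,\}
\]
to have positive Lebesgue measure, hence to be uncountable. For every such $t$ the two prime ends rotation numbers $\eta_t=\rho_{\mathrm{ex}}(\widetilde\Phi_t,K_t)$ and $\eta'_t=\rho_{\mathrm{in}}(\widetilde\Phi_t,K_t)$ are distinct (by the separation $c>0$) irrationals, which yields (iii) and completes the proof.
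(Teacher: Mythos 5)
Your proposal follows essentially the same route as the paper: BBM embeddings of inverse limits over the reduced Arnold family, rotation sets read off from the natural extension for (i), a crookedness perturbation (the paper uses topological exactness of $g_t$ together with Theorem 22 of \cite{KOT}) for (ii), and an accessible-point analysis identifying the two prime ends rotation numbers with the endpoints of the rotation interval via Hern\'andez-Corbato's theorem for (iii). The paper implements the accessibility step through the ``efficient climbing'' and ``lower climbing'' intervals of Lemma~\ref{one_turn} and a radially structured unwrapping, which is exactly the analysis you flag as the main obstacle.

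There is, however, one genuine soft spot in your final step. To get uncountably many parameters you need \emph{both} endpoint functions $\rho_-(t)$ and $\rho_+(t)$ to be simultaneously irrational on an uncountable set, and you derive this from the assertion that one can ``arrange the total length of all plateaus of $\rho_-$ and $\rho_+$ to be strictly less than $|I|$.'' That claim is not justified and is not obviously arrangeable: knowing each endpoint function is continuous and monotone only tells you that each is individually irrational on an uncountable set, and two uncountable sets can intersect badly; the measure estimate on the union of mode-locking plateaus is precisely what would need proof. The paper sidesteps this entirely by exploiting that the reduced Arnold family consists of \emph{odd} lifts, so $\rho(\widetilde g_t)=[-\eta_t,\eta_t]$ and hence $\rho_{\text{in}}=-\eta_t$, $\rho_{\text{ex}}=\eta_t$: both endpoints are irrational exactly when $\eta_t$ is, and since $\eta_t$ is continuous with $\eta_{t_*'}>\eta_{t_*}$, the intermediate value theorem already yields uncountably many such $t$ (distinctness being automatic since $\eta_t>0$). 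You should either invoke this symmetry or supply the missing plateau-measure argument. A second, more minor point: in (ii) you insist on keeping the rotation interval \emph{unchanged} under the crookedness perturbation, which is stronger than what the statement requires (only that $K'_t$ remain Birkhoff-like) and would be harder to guarantee.
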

Note that Theorem F of \cite{KLCN} 
implies that when prime ends rotation numbers from Theorem~\ref{pseudo-circles} are irrational, the induced dynamics on two circles of prime ends are Denjoy homeomorphisms.
In view of item (iii) Theorem~\ref{pseudo-circles}, 
we can ensure that 
each cofrontier attractor $K_t$ is indecomposable, 
but it is yet to be determined whether such $K_t$ can all be homeomorphic to Bing's pseudo-circle \cite{Bing}. 
Note that the homeomorphism group of the pseudo-circle does not contain any non-degenerate continuum \cite{Lewis83}, 
so in a potential family the embeddings would need to change continuously, in such a way as to produce continuosly varying prime ends rotation numbers, but the homeomorphisms could not change in such a way. 

\begin{question}
	Can the
	interior and exterior prime ends rotation numbers
	be distinct irrational numbers for a cofrontier that is a pseudo-circle?
\end{question}
\begin{question}
	Does there exist a parameterized 
	family of  homeomorphisms 
	$\{ \Phi_t:\mathbb{S}^2\to\mathbb{S}^2\}_{t\in I}$ such that 
	for any $t\in I$, 
	$\Phi_t$ preserves 
	a cofrontier attractor $K_t$, 
	which 
	is 
	homeomorphic to the pseudo-circle? 
\end{question}

Another problem that can be dealt with proper embeddings of the inverse limit dynamics 
is as follows. 
During the Workshop on Dynamical Systems and Continuum Theory, held at 
University of Vienna, 
June 29 - July 3, 2015,
Philip Boyland raised the following question: 
\begin{quote}
Suppose that $\widetilde f$ is a lift of a circle endomorphism $f : \mathbb{S}^1 \to \mathbb{S}^1$ of degree $1$, 
and assume that its rotation interval $\rho(\widetilde f)=[a,b]$.
Then is it true that,
the exterior and interior 
prime ends rotation numbers of the unwrapping of 
$f$ via inverse limit method
 in the annulus are exactly $a$ and $b$, respectively?
\end{quote}
For Birkhoff attractors, 
it is always true that the prime ends rotation numbers are the endpoints of 
the rotation interval (see \cite{Birkhoff_attractor_Patrice}).
Related to this, Boyland also asked if complicated inverse limit spaces (of the interval or the circle)
can be embedded in $ \mathbb{S}^2$
in multiple ways, which was already answered for unimodal inverse limits in \cite{A.Anuvsic2017}. Here we answer these two questions by showing the following result.
\begin{theorem}\label{NoEndPoint}
There exists an orientation-preserving and homology-preserving 
annulus homeomorphism $\Phi:\mathbb{A}\to\mathbb{A}$,
 with an attracting non-transitive Birkhoff-like cofrontier $K$,
  such that the lifted interior prime ends rotation number 
  $\rho_{\text{in}}(\widetilde \Phi,K)$ 
  is contained in the interior of 
  the interval  
  $\rho(\widetilde \Phi)$. 
  There exists another embedding of the pair $(K,\Phi\big |_K)$ as an attractor in $\mathbb{A}$, so that the two induced lifted  prime ends rotation numbers
  are exactly the 
  two endpoints of the rotation interval 
  of the corresponding natural extension.
\end{theorem}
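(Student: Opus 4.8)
The plan is to realize the pair $(K,\Phi|_K)$ as the natural extension of a degree-one circle endomorphism, embed it as an attracting cofrontier by the Brown--Barge--Martin (BBM) method, and then exploit the fact that the induced prime ends rotation numbers are a feature of the embedding and not of the abstract system. First I would fix a degree-one circle endomorphism $f:\mathbb{S}^1\to\mathbb{S}^1$ with nondegenerate rotation interval $\rho(\tilde f)=[a,b]$ which is moreover \emph{non-transitive}; a convenient choice is a map from the reduced Arnold family in the non-invertible regime, carrying an attracting periodic orbit of some intermediate rotation number $c\in(a,b)$ alongside the extremal invariant sets of rotation numbers $a$ and $b$. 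The inverse limit $K=\varprojlim(\mathbb{S}^1,f)$ is then an indecomposable cofrontier, the shift $\hat f$ is non-transitive, and the rotation set of $\hat f$ on $K$ equals $[a,b]$. Applying BBM to a planar thickening of the core circle yields an orientation- and homology-preserving annulus homeomorphism $\Phi$ with $K=\bigcap_{n\geq 0}\Phi^n(\mathbb{A})$ an attracting cofrontier and $\Phi|_K$ conjugate to $\hat f$.

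The analytic heart is to express each prime ends rotation number in terms of the orbits that are \emph{accessible} from the corresponding complementary domain. For an indecomposable cofrontier, the points accessible from a fixed complementary domain lie in a single composant, and I would show that the circle homeomorphism induced by $\Phi$ on Carath\'eodory's prime ends circle has rotation number equal to the $\hat f$-rotation number carried by that accessible composant, for instance the rotation number of the periodic point it contains. The key structural point is that \emph{which} composant becomes accessible from the interior is dictated by the geometry of the thickening: since $f$ is non-invertible, its folds can be embedded so that different branches, hence different composants, face the inner boundary.

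For the first embedding, the one producing a prime ends number strictly inside the rotation interval, I would route the thickening so that the composant accessible from the interior domain is the one asymptotic to the intermediate attracting orbit of rotation number $c$. Non-transitivity is exactly what makes this routing admissible: the intermediate orbit is dynamically isolated, so its accessible composant can be placed on the inner frontier without dragging the extremal orbits there. The accessible-composant computation then gives $\rho_{\text{in}}(\tilde\Phi,K)=c$, which lies in the interior of $\rho(\tilde\Phi)=[a,b]$, as required.

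For the second embedding I would re-embed the \emph{same} pair $(K,\hat f)$ by the standard BBM unwrapping, in which the two frontiers are made accessible by the extremal invariant sets: the inner frontier by a rotation-number-$a$ composant and the outer by a rotation-number-$b$ composant. The resulting cofrontier behaves like a genuine Birkhoff attractor, so the same computation returns prime ends rotation numbers exactly equal to the endpoints $a$ and $b$; that one inverse limit admits topologically distinct embeddings of this kind is in the spirit of \cite{A.Anuvsic2017}. I expect the main obstacle to be the second paragraph together with the routing arguments: one must prove rigorously that the Carath\'eodory prime ends dynamics is semiconjugate to the induced dynamics on the accessible composant with matching rotation number, and that the thickening can be engineered to prescribe the accessible composant while keeping $K$ an attractor and $\Phi$ a bona fide annulus homeomorphism.
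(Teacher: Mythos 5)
Your overall architecture coincides with the paper's: realize $(K,\Phi|_K)$ as the natural extension of a degree-one circle endomorphism, embed it by the BBM method, read off each prime ends rotation number from an accessible orbit, and produce the second embedding by changing only the unwrapping so that different orbits become accessible. The paper performs the crucial identification not through composant structure but via Hern\'andez-Corbato's theorem (Lemma~\ref{Luis_lemma}): an accessible periodic point, or an accessible point whose forward and backward rotation numbers agree, determines the lifted prime ends rotation number. Where your proposal diverges is in the instantiation, and there it has two concrete weak points.

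First, the choice of bonding map. The paper does not use the Arnold family here; it builds a bespoke piecewise affine degree-one map $g$ that fixes an entire arc $[1/2,1]$ pointwise and has rotation interval $[-1,1]$. The arc of fixed points simultaneously yields non-transitivity of the shift (via the $\omega$-limit formula for inverse limits) and an interior rotation number $0$ carried by fixed points, which are the easiest orbits to make accessible. Your proposed reduced Arnold family $\widetilde g_t(x)=x+\frac{t}{2\pi}\sin(2\pi x)$ is problematic for this purpose: its only ``free'' intermediate-rotation-number orbits are the fixed points $0$ and $1/2$, with $\widetilde g_t'(1/2)=1-t$, and by the time $t$ is large enough for the rotation interval to be nondegenerate one has $|1-t|>1$, so both fixed points are repelling. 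The attracting periodic orbit of intermediate rotation number on which you rely both for non-transitivity and for the interior prime ends computation is therefore not known to exist at such parameters without further argument; you would need either to verify its existence for some parameter (or pass to the full two-parameter family) or, as the paper does, design the bonding map by hand. Second, the step you yourself flag as the main obstacle --- proving that the prime ends rotation number equals the rotation number of the prescribed accessible orbit, and that the thickening can be ``routed'' to make that orbit accessible --- is the entire content of the proof and is left unresolved in your plan. The paper resolves it by (i) invoking Lemma~\ref{Luis_lemma} for accessible periodic points and (ii) defining the unwrapping explicitly so that radial arcs over the relevant points map into radial arcs, which permits writing down the accessing arc in the inverse limit as $(\ldots,Q_{-1},Q_0)$; the second embedding is obtained by redrawing the unwrapping so that the fixed arc becomes inaccessible and extremal orbits become accessible. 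Without an argument at that level of concreteness, your ``routing'' remains an assertion rather than a proof; with those two repairs your plan becomes essentially the paper's.
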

The paper is organized as follows. In Section~\ref{preliminaries}, we present
some standard notation and state some related results that we will use in the paper, 
in particular a parametrised version of the BBM method. 
In Section~\ref{endo_rotation_lemmas}, 
we will prove a lemma about circle endomorphisms needed for future use. 
In Section~\ref{lakes_wada}, we give a proof of Theorem~\ref{parametrised}, which 
is the core of this paper. In Section~\ref{with_no_fixedpoints} we show Theorem~\ref{thm:noFP}. 
Finally, in Section~\ref{applications}, 
we prove Theorem~\ref{pseudo-circles} and Theorem~\ref{NoEndPoint}.

\section{Preliminaries}\label{preliminaries}
\subsection{Inverse Limit Spaces and the BBM Embedding}
Let $X$ be a metric space. Two homeomorphisms 
$f,g:X\to X$ are called \emph{topologically conjugate}, 
if there exists a homeomorphism 
$h:X\to X$ so that $h\circ g\circ h^{-1}= f$.
Denote by $\mathcal{C}(X,X)$ (respectively, $\mathcal{H}(X,X)$) 
 the set of all continuous mappings 
 on a metric space $X$ (respectively, the set of all homeomorphisms of $X$).

Let $\mathbb{N}_{0}$ denote the set of non-negative integers $\{0,1,\cdots\}$.
Our main tool for constructing the examples are 
\emph{inverse limit spaces}. For $f \in \mathcal{C}(X,X)$,
we denote
\begin{equation}
\underleftarrow{\lim} \{X,f\} 
:=
\{\big(\ldots,x_{-1},x_0 \big) \in X^{-\mathbb{N}_0} \big|  
x_i\in X, x_i=f(x_{i-1}), \text{ for any }i \leq 0\}.
\end{equation}
We equip $\underleftarrow{\lim} \{X,f\} $ with the subspace 
metric induced from the 
\emph{product metric} in $X^{-\mathbb{N}_0}$,
where $f$ is called the {\em bonding map}. 
The inverse limit space $\underleftarrow{\lim} \{X,f\}$
also comes with a natural homeomorphism, 
called the \emph{natural extension} of $f$, or the 
\emph{shift homeomorphism}
$\sigma_{f}:\underleftarrow{\lim}\{X, f\}
\to \underleftarrow{\lim}\{X, f\}$, 
defined as follows. 
For any $\underbar x:= \big(\ldots, x_{-2},x_{-1},x_0 \big)\in \underleftarrow{\lim}\{X,f\}$,
\begin{equation}
\sigma_f(\underbar x):= \big(\ldots,x_{-1},x_0,f(x_0) \big).
\end{equation}
By $\pi_{-k}$ we shall denote
the \emph{$(-k)$-th projection} 
from 
$\underleftarrow{\lim}\{ X,f\}$ to the $(-k)$-th coordinate.  Now we fix 
some notation useful 
for constructing 
parametrised families of examples. We will follow
mainly~\cite{BCH}, and we refer to it
 for the more general setting. 

For $n\geq 1$, 
let $D_n\subset \mathbb{S}^2$ 
denote a closed topological disk with 
$n$ open holes. A subset 
$G\subset D_n$ 
is called a \emph{boundary retract of $D_n$} if there is a continuous map 
$\alpha:\partial D_n\times [0,1]\to D_n$ 
which decomposes $D_n$ into a continuously varying family of arcs 
$\{\alpha(x,\cdot)\}_{x\in \partial D_n}\subset \mathcal C([0,1], D_n)$, 
so that $\alpha(x,\cdot)([0,1])$ 
are pairwise disjoint except 
perhaps at the endpoints $\alpha(x,1)$, where $\alpha(x,1)\in G$. 
We can then associate a retraction  
$r: D_n \to G$ defined by 
$r(\alpha(x,s))= \alpha(x,1)$
for every 
$x\in \partial D_n$ corresponding to the given decomposition. 
We say a continuous map $f: G\to G$ \emph{unwraps in $D_n$} 
if there is a near-homeomorphism $\bar{f}: D_n\to D_n$ such that 
$r\circ \bar{f}=f$. 
The near-homeomorphism $\bar{f}$ is called the unwrapping of $f$. 
Let $I=[0,\epsilon_0]$ for some $\epsilon_0>0$.
A continuous family $\{f_t\}_{t\in I}$ is said to \emph{unwrap} in $D_n$ 
if there exists a continuous family of unwrappings $\{\bar{f_t}\}_{t\in I}$ 
associated to it. We are now ready to state 
the parametrised BBM technique, which 
we will use in our construction later. The following lemma is an adaptation of Theorem 3.1 from \cite{BCH}.

\begin{lemma}\label{BBM}
	For $n\geq 1$, let $D_n$ denote the closed set obtained by removing from a closed disk interiors of $n$ disjoint closed disks. 
	Let 
	$G \subset D_n$ denote 
	a boundary retract of $D_n$ 
	and suppose 
	a family $\{f_t\}_{t\in I}\subset \mathcal{C}(G,G)$ 
	unwraps in $D_n$.  Moreover, suppose that there exists $m>0$ such that $f^{m+1}_t(G)=f^{m}_t(G)$ for all $t\in I$.
	Then, there is a continuous family $\{F_t\}_{t\in I}$ in 
	$\mathcal{H}(D_n, D_n)$, 
	such that:
	\begin{itemize}
		\item[(a)] For each $t\in I$ there is a compact 
		$F_t$-invariant set  $K_t\subset D_n$ so that:
		\begin{itemize}
			\item[(i)] $F_t|_{K_t}: K_t\to K_t$  
			is topologically conjugate to $\sigma_{f_t}: 
			\underleftarrow{\lim} \{G, f_t \} \to \underleftarrow{\lim} \{G, f_t\}$.
			\item[(ii)] If $x\in D_n \setminus \partial D_n$, 
			then the omega limit set 
			$\omega(x, F_t)\subset K_t$. 
		\end{itemize}
		\item[(b)] The attractors $K_t$ vary continuously in Hausdorff metric with $t\in I$. 
	\end{itemize}
\end{lemma}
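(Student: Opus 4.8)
The plan is to realise each member of the family by the Brown--Barge--Martin construction and then to upgrade this pointwise construction to a continuous one, essentially reproving Theorem~3.1 of \cite{BCH} in the present disk-with-holes setting while incorporating the uniform stabilisation hypothesis $f_t^{m+1}(G)=f_t^m(G)$. Fix $t$ for the moment. The unwrapping hypothesis furnishes a near-homeomorphism $\bar f_t:D_n\to D_n$ commuting with the retraction, $r\circ\bar f_t=f_t\circ r$ (which restricts on $G$ to the defining relation $r\circ\bar f_t=f_t$, since $r|_G=\mathrm{id}$). Because $\bar f_t$ is a near-homeomorphism, Brown's approximation theorem \cite{Brown} gives a homeomorphism $h_t:\underleftarrow{\lim}\{D_n,\bar f_t\}\to D_n$, and I would set $F_t:=h_t\circ\sigma_{\bar f_t}\circ h_t^{-1}$, a homeomorphism of $D_n$. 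This produces the family $\{F_t\}_{t\in I}\subset\mathcal{H}(D_n,D_n)$ required in the statement.

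Next I would identify the invariant set and prove (a)(i). The coordinatewise retraction $\hat r_t(\ldots,x_{-1},x_0):=(\ldots,r(x_{-1}),r(x_0))$ is well defined precisely because of the commuting square (one checks $r(x_i)=r\bar f_t(x_{i-1})=f_t\,r(x_{i-1})$), and it intertwines $\sigma_{\bar f_t}$ with $\sigma_{f_t}$. Setting $K_t:=h_t\big(\bigcap_{k\ge0}\sigma_{\bar f_t}^k(\underleftarrow{\lim}\{D_n,\bar f_t\})\big)=\bigcap_{k\ge0}F_t^k(D_n)$, the key point is that the fibres of $r$ are arcs that are collapsed under iteration, so that $\hat r_t$ restricts to a homeomorphism of this core onto $\underleftarrow{\lim}\{G,f_t\}$ conjugating $F_t|_{K_t}$ to $\sigma_{f_t}$. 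Here the hypothesis $f_t^{m+1}(G)=f_t^m(G)$ plays its role: every coordinate of a point of $\underleftarrow{\lim}\{G,f_t\}$ lies in the eventual image $E_t:=f_t^m(G)$, on which $f_t$ is surjective, so $\underleftarrow{\lim}\{G,f_t\}=\underleftarrow{\lim}\{E_t,f_t|_{E_t}\}$ and $\bar f_t^{\,m}(D_n)$ is already a trapping region on which the identification is clean.

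The attracting statement (a)(ii) I would obtain from this trapping structure: by construction $D_n$ is an attracting region, $\bigcap_{k\ge0}F_t^k(D_n)=K_t$, and $\operatorname{int}D_n$ is attracted to the core, so any $x\in D_n\setminus\partial D_n$ satisfies $\omega(x,F_t)\subset K_t$. This is the standard global-attractor conclusion of the BBM embedding, and it requires only that the trapping region descends to the core in finitely many steps, which the uniform $m$ guarantees.

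The main obstacle, and the genuine content beyond the pointwise case, is the parametrised continuity in (a) and (b). Here I would invoke the continuous dependence established in \cite{BCH}: since $\{f_t\}$ unwraps through a continuous family of near-homeomorphisms $\{\bar f_t\}$, Brown's theorem can be applied uniformly so that the conjugating homeomorphisms $h_t$, and hence $F_t$, may be chosen to vary continuously in the $C^0$ topology. The uniform stabilisation exponent $m$ then yields estimates on $d_H\big(\bar f_t^{\,k}(D_n),K_t\big)$ that are uniform in $t$, from which both the continuity of $t\mapsto F_t$ and the Hausdorff continuity $t\mapsto K_t$ of (b) follow. I expect the delicate step to be verifying that Brown's homeomorphisms can be selected continuously and that the passage to the core commutes with this choice; this is exactly the adaptation of \cite[Theorem~3.1]{BCH} to closed disks with $n$ holes and to eventual, rather than one-step, images, and is where the hypothesis $f^{m+1}_t(G)=f^m_t(G)$ is indispensable.
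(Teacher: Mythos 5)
Your overall strategy (realise each $F_t$ via Brown's approximation theorem applied to a natural extension, identify the attractor inside the inverse limit, and quote the parametrised continuity from \cite{BCH}) is the intended one, and the paper itself only sketches this step, deferring to Theorem 3.1 of \cite{BCH}. But your reconstruction drops the one ingredient that actually produces the attractor: the \emph{smash} map. You build $F_t$ as the shift on $\underleftarrow{\lim}\{D_n,\bar f_t\}$, i.e.\ from the unwrapping alone. By its very definition the unwrapping pushes $G$ off itself into a collar ($\bar f_t(G)\not\subset G$; only $r\circ\bar f_t|_G=f_t$ holds), so $G$ is not $\bar f_t$-invariant, $\underleftarrow{\lim}\{G,f_t\}$ does not sit inside $\underleftarrow{\lim}\{D_n,\bar f_t\}$ as an invariant subsystem, and nothing is ``collapsed under iteration'': if $\bar f_t$ happens to be an actual homeomorphism, then $\sigma_{\bar f_t}$ is conjugate to $\bar f_t$ itself and has no reason to carry a copy of $\underleftarrow{\lim}\{G,f_t\}$ at all. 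The correct bonding map is the composition $\psi_t=\Upsilon_t\circ\bar f_t$ of the unwrapping with a smash $\Upsilon_t$ that collapses the collar back onto $G$; it is this composition that satisfies $\psi_t|_G=f_t$, makes $G$ invariant, forces $\omega(x,\psi_t)\subset G$ for interior points, and is still a near-homeomorphism so that Brown's theorem applies to $\underleftarrow{\lim}\{D_n,\psi_t\}$. Relatedly, your definition $K_t:=\bigcap_{k\ge0}F_t^k(D_n)$ is vacuous: $F_t$ is a homeomorphism of $D_n$, so this intersection is all of $D_n$. The attractor has to be taken to be $h_t\bigl(\underleftarrow{\lim}\{G,f_t\}\bigr)$ directly, using the invariance of $G$ under $\psi_t$.

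Two smaller points. First, you invoke the global commutation $r\circ\bar f_t=f_t\circ r$ on all of $D_n$, which is stronger than the definition of unwrapping given in the paper and is not needed once the smash is in place. Second, the hypothesis $f_t^{m+1}(G)=f_t^m(G)$ is not what makes $D_n$ a trapping region; its role is in part (b), where it guarantees that the eventual images $\bigcap_k f_t^k(G)=f_t^m(G)$, and hence the continua $K_t\cong\underleftarrow{\lim}\{G,f_t\}$, vary continuously with $t$ in the Hausdorff metric.
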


\begin{proof}[Sketch of proof]\label{smash}
	The proof follows the proof of Theorem 3.1 in the paper~\cite{BCH}. Our version 
	is a bit more general, in that, we allow our unwrapping to be a {\em near-homeomorphism}
	(i.e., the uniform limit of homeomorphisms), 
	instead of just a homeomorphism (this is allowed due to Brown's approximation theorem from \cite{Brown}). 
	The {\em smash} mapping will be also chosen carefully, instead of being fixed as in the original proof. 
In particular, our specific choices of the unwrapping and smash mappings
will imply that the dynamics of 
$F_t$ in $D_n$ 
of examples from Theorem~\ref{parametrised} 
is indeed close to identity in the $C^0$ topology 
as $t$ is sufficiently close to $0$. 
However, the 
same conclusion of the above mentioned theorem holds true 
with our choices of unwrapping and smash mappings with 
proofs unchanged,
so we do not repeat them. 
Instead, we will stress this 
point again during the proofs of the main theorem, 
and give precise definitions of the choices. 
\end{proof}

 Let us remark that whenever we will apply Lemma~\ref{BBM}, the condition $f^{m+1}_t(G)=f^{m}_t(G)$ for all $t\in I$ from the statement of Lemma~\ref{BBM} will be satisfied for $m=1$, so we are not repeating this condition again.

\subsection{Surface Dynamics and Prime Ends Rotation Numbers}
\noindent
We will mainly work with $ \mathbb{S}^2$ due to its compactness, and due to the fact that any planar homeomorphism can be extended to the sphere by compactifying the plane by a point at infinity, and setting it as a fixed point. 
Recall that
$\Gamma$ is called a \textit{translation line} 
for a homeomorphism 
$h:\mathbb{S}^2\to\mathbb{S}^2$,
if there exists a continuous 
injective map $\gamma: \mathbb{R}\to\mathbb{S}^2$, 
onto its image $\Gamma=\gamma(\mathbb{R})$, such that 
$\Gamma$ is 
 $h$-invariant,
 and the restriction $h|\Gamma$ is fixed point free. 
Equivalently, it means that 
the composition 
$\gamma^{-1}\circ h\circ \gamma$ 
is topologically conjugate to the translation 
given by $T(x)=x+1$ for all $x\in\mathbb{R}$. 
Define the  $\omega$-limit of a translation line 
$\Gamma=\gamma( R)$ 
as $\omega(\Gamma)=\bigcap_{t>0}\overline{\gamma([t,\infty))}$. 
If $\Gamma$ is disjoint from $\omega(\Gamma)$, 
define the filled $\omega$-limit set, 
written as 
 $\widetilde{\omega}(\Gamma)$, 
 as the union of $\omega(\Gamma)$
 with all the connected components of 
 $\mathbb{S}^2\setminus\omega(\Gamma)$ which does not 
 contain $\Gamma$.  
 Note that the set $\widetilde{\omega}(\Gamma)$ is a continuum. 
 which 
 does not separate $ \mathbb{S}^2$. 
 Following \cite{Korop} we call a continuum $K$ a \emph{rotational attractor} 
 if it is a topological attractor 
 for $f$,
 and the corresponding 
 external prime ends rotation number is nonzero 
 (modulo $1$). 

We now introduce the terminology from the prime end theory that we will use in the paper. For a comprehensive introduction to the prime end theory the reader is referred to e.g. \cite{Mather}.
It is well known that, for a domain in $\mathbb{S}^2$
which is homeomorphic to an open topological disk $\mathbb{D}$, 
one can define the so-called \emph{prime ends circle}, so that its union with $\mathbb{D}$ 
is homeomorphic to the closed unit disk with a proper topology. 
If $h:\mathbb{S}^2\to \mathbb{S}^2$ preserves orientation and $h(\mathbb{D})=\mathbb{D}$ then $h$ induces an orientation preserving homeomorphism of the prime ends circle, and therefore it gives  
 a natural 
\emph{prime ends rotation number}. 
It is yet to be completely determined 
how exactly the prime ends rotation number is related with the actual dynamics on $\mathbb{D}$.
We refer to the recent paper~\cite{KLCN} for more details and state of the art in this topic.
Here we are mostly interested in knowing how the prime ends rotation number changes 
along a parametrised family of attractors and homeomorphisms, obtained from BBM embeddings. 

Our more specific context is as follows. 
For a dynamical system $f:\Bbb S^2 \to \Bbb S^2$ preserving a 
non-separating invariant
continuum $K$, 
we will consider the so called \emph{exterior prime ends 
rotation number} $\rho_{\text{ex}}(f,K)$. The complement $K^c$ is $f$-invariant, and is 
a topological disk in $ \Bbb S^2$. By definition, 
the exterior prime ends rotation number 
$\rho_{\text{ex}}(f, K)$ is the rotation number of the induced 
 prime ends circle homeomorphism. 
In order to relate 
the value of prime ends rotation number with the actual dynamics, 
one has to understand the sets of accessible points of $K$. 
For this purpose we will need 
a recent result obtained 
by Hern{\'a}ndez-Corbato from \cite{Luis}, which we recall below. 

Let $K$ be a plane non-separating continuum in an interior of a closed disk $D$. If $f$ is a homeomorphism of $D$ with $f(K)=K$, then there exists a $\xi\in K$ so that $f(\xi)=\xi$. Then $D\setminus \{\xi\}$ is homeomorphic to a half-open annulus $\mathbb{A}:=\mathbb{S}^1 \times (−\infty, 0]$.
We define a universal cover $\widetilde{\mathbb{A}}:=\mathbb{R}\times (\infty,0]$ of $\mathbb{A}$ as $\pi:\widetilde{\mathbb{A}}\to \mathbb{A}$ given by $\pi((\theta,r))=(e^{2\pi i \theta},r)$.
Then the lower boundary 
of $\widetilde{\mathbb{A}}\backslash \pi^{-1}(K)$
induces a {\em prime ends line}. If we fix a lift 
$\widetilde f:  \widetilde{\mathbb{A}}\to \widetilde{\mathbb{A}}$ then the induced dynamics on this prime ends line 
has a rotation number, denoted as $\widetilde{\rho}_{\text{ex}}(\widetilde f, K) \in \Bbb R$. 
Clearly,
$\widetilde{\rho}_{\text{ex}}(\widetilde f, K)$ modulo $\Bbb Z$
is the exterior prime ends rotation number 
$\rho_{\text{ex}}(f,K)$.
Therefore, 
we call $\widetilde{\rho}_{\text{ex}}(\widetilde f, K)$ the \emph{lifted exterior prime ends rotation number}. 
A point $p\in K$ is \emph{accessible} if there is an arc 
$\lambda:[0,1]\to \Bbb S^2$, such that $\lambda([0,1))\subset \Bbb S^2\backslash K$ and 
$\lambda(1)=p\in K$. 

\begin{lemma}[Theorem 1.1 and Theorem 1.2 in \cite{Luis}]\label{Luis_lemma}
Let $\widetilde f$ be defined as above. 
Let  
$p\in K$ denote
an accessible point of $K$ from exterior.
Suppose one of the following conditions hold. 
\begin{enumerate}
\item either $p$ is $f$-periodic for some period $n$.
\item or,
 the forward rotation number of $p$ 
 equals the backward rotation number, i.e., for any lifted point $\widetilde p$ corresponding to $p$,
\begin{equation}\label{forward_equals_backward}
\lim_{n\to +\infty} \frac 1n \text{pr}_1 \big(\widetilde f^n(\widetilde p)-\widetilde p\big)= 
\lim_{n\to +\infty} \frac 1n \text{pr}_1\big(\widetilde p-\widetilde f^{-n}(\widetilde p) \big),
\end{equation}
where $\text{pr}_1$ denotes the first coordinate projection. 
\end{enumerate}
Then, the lifted exterior 
prime ends rotation number $\widetilde{\rho}_{\text{ex}}(\widetilde f,K)$ equals 
the point-wise rotation number of the point $p$.
\end{lemma}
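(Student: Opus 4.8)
The plan is to work entirely in the universal cover $\widetilde{\mathbb{A}}$ and to transport the rotation of the point $p$ to that of the prime end it accesses. Set $U:=\mathbb{A}\setminus K$ (the exterior of $K$), let $\widetilde U:=\widetilde{\mathbb{A}}\setminus\pi^{-1}(K)$ be its lift, and attach to $\widetilde U$ its prime ends line $L$, so that $\widetilde f$ extends to a homeomorphism of $\widetilde U\cup L$ whose restriction $\widetilde g:=\widetilde f|_L$ is a lift of the prime ends circle homeomorphism; thus $\widetilde\rho_{\text{ex}}(\widetilde f,K)=\lim_m\tfrac1m(\widetilde g^m(x)-x)$ for every $x\in L$. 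Since $p$ is accessible from the exterior, fix an arc $\lambda\subset U\cup\{p\}$ landing at $p$; lifting it to an arc $\widetilde\lambda$ landing at a chosen $\widetilde p\in\pi^{-1}(p)$ singles out a prime end, that is, a point $x_0\in L$. The whole problem then reduces to comparing the growth of $\widetilde g^m(x_0)$ with that of $\text{pr}_1(\widetilde f^m(\widetilde p))$, because the arc $\widetilde f^m(\widetilde\lambda)$ joins $\widetilde f^m(\widetilde p)\in\pi^{-1}(K)$ to the foot $\widetilde g^m(x_0)\in L$.

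For the periodic case (hypothesis (1)), write $\widetilde f^n(\widetilde p)=\widetilde p+(k,0)$, so that the pointwise rotation number of $p$ equals $k/n$. Consider $\Psi:=T_{-k}\circ\widetilde f^{\,n}$, where $T_{-k}$ is the deck translation by $(-k,0)$; then $\Psi(\widetilde p)=\widetilde p$, so $\Psi$ permutes the prime ends of $\widetilde U$ whose principal point is $\widetilde p$. These form a bounded subset of $L$ --- a single lift of $p$ accounts for only an interval of mutually non-translate prime ends --- so $\Psi|_L$ has bounded orbits and hence rotation number $0$. As the rotation number of $\Psi|_L=T_{-k}\circ\widetilde g^{\,n}$ is $n\,\widetilde\rho_{\text{ex}}(\widetilde f,K)-k$, we conclude $\widetilde\rho_{\text{ex}}(\widetilde f,K)=k/n$, as claimed.

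Under hypothesis (2) the same reduction applies, but now the winding of the iterated accessing arc must be controlled. Let $C_m\ge0$ be the oscillation of $\text{pr}_1$ along $\widetilde f^m(\widetilde\lambda)$; then $|\widetilde g^m(x_0)-\text{pr}_1(\widetilde f^m(\widetilde p))|\le C_m$, and it suffices to prove $C_m=o(m)$, for then $\widetilde g^m(x_0)$ and $\text{pr}_1(\widetilde f^m(\widetilde p))$ share the same growth rate. Concretely, the aim is a two-sided estimate of the form
\begin{equation*}
\lim_{m\to+\infty}\tfrac1m\,\text{pr}_1\big(\widetilde p-\widetilde f^{-m}(\widetilde p)\big)\ \le\ \widetilde\rho_{\text{ex}}(\widetilde f,K)\ \le\ \lim_{m\to+\infty}\tfrac1m\,\text{pr}_1\big(\widetilde f^{m}(\widetilde p)-\widetilde p\big),
\end{equation*}
whose two sides are, respectively, the backward and forward rotation numbers of $p$; hypothesis (2) forces them to coincide, collapsing the bound to the common value and yielding $\widetilde\rho_{\text{ex}}(\widetilde f,K)$ equal to the pointwise rotation number of $p$.

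The step I expect to be the main obstacle is exactly this sublinear winding estimate. A priori the arc $\widetilde f^m(\widetilde\lambda)$ may wind around the annulus a number of times growing linearly in $m$, and it is only the hypothesis --- periodicity in case (1), or the coincidence of forward and backward rotation numbers in case (2) --- that rules out a net linear drift of the foot $\widetilde g^m(x_0)$ relative to $\text{pr}_1(\widetilde f^m(\widetilde p))$. The natural tool is the equivariance $\widetilde f\circ T_1=T_1\circ\widetilde f$ together with the linear order of prime ends along $L$: the family $\{\widetilde f^m(\widetilde\lambda)+(j,0)\}_{m\ge0,\,j\in\mathbb Z}$ is invariant under both $\widetilde f$ and $T_1$ and is linearly ordered, so comparing a given iterate with the translates between which its endpoint falls produces one-sided control of $C_m$; combining the forward and the backward dynamics should upgrade this to the two-sided estimate above. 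I would first settle case (1), where this configuration is genuinely periodic and the fixed-point argument of the second paragraph makes the winding bound exact, and then adapt the same ordering scheme to extract the $o(m)$ bound under hypothesis (2).
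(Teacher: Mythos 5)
First, a point of reference: the paper offers no proof of this lemma at all --- it is imported verbatim as Theorems 1.1 and 1.2 of the cited work of Hern\'andez-Corbato \cite{Luis}, so there is no internal argument to compare yours against. Judged on its own terms, your outline follows the natural (and essentially the actual) strategy: lift to $\widetilde{\mathbb{A}}$, let the accessing arc $\widetilde{\lambda}$ single out a prime end $x_0$ on the prime ends line $L$, note that $\widetilde{f}^m(\widetilde{\lambda})$ lands at $\widetilde{f}^m(\widetilde{p})$ and defines the prime end $\widetilde{g}^m(x_0)$, and compare the two drifts.

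The proposal nevertheless has a genuine gap, and you point at it yourself: everything is reduced to the sublinear winding estimate $C_m=o(m)$ (equivalently, to the squeeze of $\widetilde{\rho}_{\text{ex}}(\widetilde{f},K)$ between the backward and forward rotation numbers of $p$), but that estimate \emph{is} the content of the theorem and is never established. Observing that the family $\{\widetilde{f}^m(\widetilde{\lambda})+(j,0)\}_{m,j}$ is invariant under $\widetilde{f}$ and the deck translation and is linearly ordered names the ingredients without producing the bound: one must still rule out that the free part of $\widetilde{f}^m(\widetilde{\lambda})$ accumulates winding at a linear rate even though its endpoint on $\pi^{-1}(K)$ drifts at the pointwise rotation rate, and that exclusion (via translated cross-cuts separating $\widetilde{g}^m(x_0)$ from suitable translates $x_0+j$) is where all the work lies. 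Two further assertions are used but not proved: (i) in case (1), that the prime ends having $\widetilde{p}$ as a principal point form a bounded subset of $L$ --- plausible, but a priori such a collection could meet unboundedly many fundamental domains, so an argument is required; (ii) the inequality $|\widetilde{g}^m(x_0)-\text{pr}_1(\widetilde{f}^m(\widetilde{p}))|\le C_m$ presupposes a coarse identification of positions on the abstract line $L$ with the $\text{pr}_1$-coordinate of nearby points of $\widetilde{U}$, which holds only up to a uniform additive constant and needs to be set up. As it stands, the proposal is a correct reduction of the statement to its essential difficulty rather than a proof of it.
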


We will also need the following result by Barge~\cite{Barge}. 
\begin{lemma}[Proposition 2.2 in \cite{Barge}]\label{Barge}
Suppose that $\{\Phi_t\}_{t\in I}$ is a continuous family of orientation-preserving  homeomorphisms on $\Bbb S^2$. 
For every $t\in I$ let $K_t$ be a 
non-degenerate sphere non-separating continuum, invariant under $\Phi_t$,
and assume that $\{K_t\}_{t\in I}$ vary continuously with $t$ in Hausdorff metric. 
Then the exterior prime ends rotation numbers 
$\rho_{\text{ex}}(\Phi_t,\Lambda_{t})$ vary continuously with $t$.   	
\end{lemma}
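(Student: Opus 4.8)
The plan is to reduce the statement to the classical fact that the rotation number, viewed as a map $\mathrm{Homeo}^+(\mathbb{S}^1)\to\mathbb{R}/\mathbb{Z}$, is continuous in the uniform ($C^0$) topology, and then to exhibit, for $t$ near a fixed $t_0\in I$, an identification of the exterior prime ends circle of $U_t:=\mathbb{S}^2\setminus K_t$ with a fixed model circle under which the induced boundary homeomorphisms vary continuously in $C^0$. Once this is in place, continuity of the rotation number gives $\rho_{\text{ex}}(\Phi_t,K_t)\to\rho_{\text{ex}}(\Phi_{t_0},K_{t_0})$, which is exactly the assertion.

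First I would set up the uniformization. Since each $K_t$ is a non-degenerate non-separating continuum, its complement $U_t$ is an open topological disk, and, because $K_t$ is $\Phi_t$-invariant, $\Phi_t$ restricts to an orientation-preserving homeomorphism of $U_t$ that extends to the prime ends compactification $\widehat{U_t}\cong\overline{\mathbb{D}}$; by definition $\rho_{\text{ex}}(\Phi_t,K_t)$ is the rotation number of the resulting circle homeomorphism $g_t$ of the prime ends circle. Fixing a basepoint $p_\ast\in U_{t_0}$ (so that $p_\ast\in U_t$ for all $t$ sufficiently close to $t_0$) together with Riemann maps $\psi_t\colon\mathbb{D}\to U_t$ normalised by $\psi_t(0)=p_\ast$ and $\psi_t'(0)>0$, the conjugated maps $G_t:=\psi_t^{-1}\circ\Phi_t\circ\psi_t$ are self-homeomorphisms of the open disk whose induced prime end maps on $\partial\mathbb{D}$ are precisely the $g_t$.

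Next I would invoke the Carath\'eodory Kernel Theorem. Hausdorff convergence $K_t\to K_{t_0}$ implies, by a short argument using that any compact subset of $U_{t_0}$ stays at positive distance from $K_{t_0}$, that the domains $U_t$ converge to $U_{t_0}$ in the sense of Carath\'eodory kernel convergence with respect to $p_\ast$; hence $\psi_t\to\psi_{t_0}$ locally uniformly on $\mathbb{D}$. Combined with the hypothesis $\Phi_t\to\Phi_{t_0}$ uniformly, this yields $G_t\to G_{t_0}$ locally uniformly on the open disk.

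The main obstacle is to promote this interior convergence to convergence of the boundary maps $g_t$, because in the situations of interest the continua are wild (Lakes of Wada, cofrontiers, indecomposable), so the $\psi_t$ need not extend continuously to $\partial\mathbb{D}$ and no boundary regularity is available. To circumvent this I would detect the rotation number entirely through interior data that is stable under kernel convergence, passing to the annular cover of $U_t\setminus\{p_\ast\}$ as in the framework preceding Lemma~\ref{Luis_lemma}: there the lifted prime ends rotation number $\widetilde\rho_{\text{ex}}(\widetilde\Phi_t,K_t)$ is the translation number of the lifted map along the prime ends line, and I would express it as a limit of average angular displacements of the lifted maps $\widetilde G_t$ over cross-cuts converging to the boundary end. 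These displacements vary continuously because $\widetilde\Phi_t\to\widetilde\Phi_{t_0}$ uniformly and $K_t\to K_{t_0}$ in the Hausdorff metric, which forces $g_t\to g_{t_0}$ in $C^0$ (equivalently, convergence of the translation numbers). The conclusion then follows from the continuity of the rotation number recalled at the outset.
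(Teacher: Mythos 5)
First, a point of reference: the paper does not prove this statement at all --- it is quoted as Proposition 2.2 of \cite{Barge} and used as a black box --- so the comparison is really between your argument and Barge's, not anything in the present text. Your setup (normalised Riemann maps $\psi_t$, Carath\'eodory kernel convergence giving $\psi_t\to\psi_{t_0}$ locally uniformly on $\mathbb{D}$, hence $G_t\to G_{t_0}$ locally uniformly on the open disk) is sound, and you correctly identify the real obstacle: locally uniform convergence in the interior gives no control of the boundary correspondence, since the $\psi_t$ need not extend continuously and the prime ends structure of $U_t$ is not a Hausdorff-continuous function of $K_t$. The problem is that your final paragraph does not overcome this obstacle; it restates the conclusion. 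The claim that Hausdorff convergence of $K_t$ together with uniform convergence of $\Phi_t$ ``forces $g_t\to g_{t_0}$ in $C^0$'' is exactly what would have to be proved, is given no argument, and is moreover much stronger than the lemma: the lemma asserts convergence of rotation numbers only, not uniform convergence of the induced prime ends homeomorphisms under the Riemann-map identification, and there is no reason to expect the latter in general. Similarly, writing the translation number as a limit of average angular displacements over cross-cuts and then inferring continuity in $t$ from continuity of each term is an unjustified interchange of limits --- you would need that defining limit to be uniform in $t$, which is again the whole difficulty.

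A proof that actually closes this gap has to detect the rotation number by finitely much data located strictly inside the open domain, where Hausdorff convergence of $K_t$ and $C^0$ convergence of $\Phi_t$ do give control. Concretely, one shows that for every rational $p/q$ the conditions $\rho_{\text{ex}}(\Phi,K)<p/q$ and $\rho_{\text{ex}}(\Phi,K)>p/q$ are open in the pair $(\Phi,K)$: each is certified by a single cross-cut $C$ of $U$ and the position of $\Phi^{q}(C)$ relative to the translate of (a lift of) $C$ by $p$ in the universal cover of $U\setminus\{\xi\}$, and this separation relation is checked on compact subsets of $U$ that stay at positive distance from $K$, hence persists under small perturbations. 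Two-sided openness over all rationals then yields continuity of $\rho_{\text{ex}}$. Your proposal stops short of this step, so as written it has a genuine gap at its crux.
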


\section{Auxiliary Lemma on Circle Endomorphisms with Two Turns}\label{endo_rotation_lemmas}

In this section, we recall a useful lemma concerning rotation sets of circle endomorphisms with two turns. This should be already known from~\cite{Ito_rotation} and~\cite{Palis_Endo} (see also \cite{Boyland_1986_bifurcations} for this special class of endomorphisms). For reader's convenience we include the proofs. Denote by $\text{End}_1(\mathbb{S}^1)$ the space of circle endomorphisms of degree $1$, and by 
$\widetilde{\text{End}}_1 (\mathbb{S}^1)$ the set of lifts of elements in 
$\text{End}_1(\mathbb{S}^1)$ to $ \mathbb{R}^1$. 
For any $\widetilde f\in \widetilde{\text{End}}_1(\mathbb{S}^1)$, 
we define a rotation set $\rho(\widetilde f)$ as follows. 
First let
\begin{equation}\label{original_limsup_definition}
\rho(\widetilde f, x):=\limsup_{n\to \infty} \frac 1n (\widetilde f^n(\widetilde x)-\widetilde x).
\end{equation}
Note that the expression above 
does not depend on the choice of the lifted point $\widetilde x$  corresponding to $x$.
We set  
\begin{equation}\label{eq:rho}
\rho(\widetilde f): = \{ \rho(\widetilde f, x) \big| 
x\in \mathbb{S}^1\},
\end{equation} which 
is a closed interval by the main result of \cite{Ito_rotation}. Now let us consider a family of endomorphisms of a special form, which was also considered in Section 2 of \cite{Boyland_1986_bifurcations} for other purposes. 
Suppose some interval $[\widetilde z_0,\widetilde z_0+1]$ 
is subdivided into two 
subintervals, namely, 
$\widetilde{I}_1= [\widetilde z_0, \widetilde y_0]$, 
$\widetilde{I}_2=[\widetilde y_0,\widetilde z_0+1]$. 
Assume $\widetilde f\in \widetilde{\text{End}}_1(\mathbb{S}^1)$
is such that $\widetilde f\big |_{\widetilde{I}_1}$ is decreasing, 
and $\widetilde f\big |_{\widetilde{I}_2}$ is increasing. 
Whenever there exist $\widetilde z_0,\widetilde y_0$ as above, 
we call such a $\widetilde f$ the lift of a \emph{circle endomorphism with two turns.}
For such an endomorphisms it was proved in \cite{Boyland_1986_bifurcations} that every rotation 
number $\rho \in \rho (\widetilde f)$ can always be realized by some point. More precisely, 
we can replace the original definition (\ref{eq:rho})
with the more naturally defined pointwise rotation number. 
\begin{equation}\label{point_wise_definition}
\rho_{\text{pp}}(\widetilde f) = \{ \lim_{n\to \infty} \frac 1n (\widetilde f^n(\widetilde x)-\widetilde x) \big | \text{ when the limit exist.}\}
\end{equation}
By Proposition 2.3 of \cite{Boyland_1986_bifurcations}, for an endomorphism with two turns we have 
\begin{equation}\rho_{\text{pp}}(\widetilde f)= \rho(\widetilde f).
\end{equation}
We shall need this observation in the arguments below. Let us define the point 
\begin{equation}
\widetilde w_0:= \sup \{\widetilde x \big | \widetilde x \in [\widetilde z_0,\widetilde z_0+1], 
\widetilde f(\widetilde x)=\widetilde f(\widetilde z_0)\}.
\end{equation}
Then, we call the interval 
$\widetilde J:=[\widetilde w_0,\widetilde z_0+1]$ 
an \emph{efficient climbing interval}. Note that it is uniquely defined 
up to an integer translation.
We also define the \emph{lower climbing interval} as the interval 
$[\widetilde y_0, \widetilde w_0']$, where
\begin{equation}
\widetilde w_0'=\inf\{\widetilde x \big| \widetilde x \in [\widetilde y_0,\widetilde z_0+1], \widetilde f(\widetilde x) = \widetilde f(\widetilde y_0) +1\}.
\end{equation}

\begin{lemma}\label{one_turn}
Let $\widetilde f \in\widetilde{\text{End}}_1(\mathbb{S}^1)$ 
be the lift of a circle endomorphism with two turns, and denote by $\widetilde J$ an 
efficient climbing interval of $\widetilde f$.
Then there exists a point $\widetilde y_0\in \widetilde J$, with the following two properties. 
\begin{itemize}
\item It is possible to choose backward iterates of $\widetilde y_0$, namely,  
 $\widetilde y_{-k}\in \widetilde{f}^{-k}(\widetilde {y}_0)$, such that each $\widetilde y_k$
 belongs to some integer translate of $\widetilde J$.

\item The forward rotation number of $\widetilde y_0$
coincides with the backward 
rotation number of $\widetilde y_0$, which is the supremum 
of the rotation segment $\rho(\widetilde f)$.
\end{itemize}
\end{lemma}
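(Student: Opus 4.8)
The plan is to compare $\widetilde f$ with its monotone upper envelope and to realise $\sup\rho(\widetilde f)$ by a bi-infinite orbit that never leaves the climbing intervals, where the two maps coincide. Write $\rho_+:=\sup\rho(\widetilde f)$ and define $\widetilde f_u(\widetilde x):=\sup_{\widetilde y\le \widetilde x}\widetilde f(\widetilde y)$, the smallest non-decreasing degree-$1$ map lying above $\widetilde f$. It is classical (and available through the references \cite{Ito_rotation} and \cite{Boyland_1986_bifurcations} already cited) that $\widetilde f_u$ has a well-defined rotation number and that $\rho(\widetilde f_u)=\rho_+$; I would recall this first. The point of passing to $\widetilde f_u$ is that it is monotone, hence satisfies a uniform bound $\sup_{\widetilde x\in\mathbb R,\, n\ge 0}\big|\widetilde f_u^{\,n}(\widetilde x)-\widetilde x-n\rho_+\big|<\infty$; this single estimate will force both the forward and the backward rotation number of any forward or backward $\widetilde f_u$-orbit to equal $\rho_+$.

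Next I would compute $\widetilde f_u$ explicitly from the two-turn shape. Setting $a:=\widetilde f(\widetilde z_0)$, on $\widetilde I_1$ the map decreases from $a$ and on $[\widetilde y_0,\widetilde w_0]\subset \widetilde I_2$ it increases back up to $a$, so $\widetilde f\le a$ on $[\widetilde z_0,\widetilde w_0]$ and the running supremum is the constant $a$ there, while on the efficient climbing interval $\widetilde J=[\widetilde w_0,\widetilde z_0+1]$ the map is increasing with values $\ge a$, whence $\widetilde f_u=\widetilde f$ on $\widetilde J$. By periodicity $\widetilde f_u$ equals $\widetilde f$ on every translate $\widetilde J+n$ and is constant on the complementary flat intervals, and $\widetilde f_u$ maps each $\widetilde J+n$ increasingly onto $[a+n,a+n+1]$. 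Two consequences I would record: first, these images tile $\mathbb R$, so every real number has an $\widetilde f_u$-preimage, hence an $\widetilde f$-preimage since the maps agree there, inside some translate of $\widetilde J$; second, any orbit contained in $\bigcup_n(\widetilde J+n)$ is simultaneously an $\widetilde f$-orbit and an $\widetilde f_u$-orbit. The first consequence already yields the first bullet of the Lemma: starting from any point of $\widetilde J$ one can choose backward iterates staying in translates of $\widetilde J$.

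It then remains to pick the starting point $\widetilde y_0\in\widetilde J$ whose forward orbit also stays in the climbing intervals and realises $\rho_+$. If $\rho_+$ is irrational I would take $\widetilde y_0$ in the minimal set of $\widetilde f_u$: since $\widetilde f_u$ is semiconjugate to the rotation $R_{\rho_+}$ and collapses each flat interval to a point, the minimal set avoids the interiors of the flat intervals and therefore lies in $\bigcup_n(\widetilde J+n)$, and its forward orbit stays there. If $\rho_+=p/q$ is rational I would instead take a periodic point of $\widetilde f_u$ inside the climbing intervals, using that each flat interval $[\widetilde z_0+n,\widetilde w_0+n]$ has the same $\widetilde f_u$-image as the endpoint $\widetilde w_0+n\in\widetilde J+n$, so that after collapsing the flat intervals to points one obtains a genuine degree-$1$ circle homeomorphism of rotation number $p/q$ whose periodic orbit pulls back to a periodic $\widetilde f_u$-orbit inside the climbing intervals (choosing the endpoint representative whenever a flat interval is met). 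In either case the forward orbit of $\widetilde y_0$ stays in $\bigcup_n(\widetilde J+n)$, so by the second consequence it is an $\widetilde f$-orbit, and extending it backward by the first consequence gives a bi-infinite $\widetilde f$-orbit lying in translates of $\widetilde J$. Applying the uniform bound to $\widetilde f_u^{\,n}(\widetilde y_0)=\widetilde f^{\,n}(\widetilde y_0)$ and to $\widetilde y_0=\widetilde f_u^{\,n}(\widetilde y_{-n})$ then shows that the forward and backward rotation numbers of $\widetilde y_0$ both equal $\rho_+=\sup\rho(\widetilde f)$, which is the second bullet.

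The main obstacle I anticipate is control of the two time directions off the flat intervals: the envelope $\widetilde f_u$ governs the rotation numbers, but it equals $\widetilde f$ only on the climbing intervals, so everything hinges on producing an orbit that avoids the interiors of the flat intervals in both directions. The backward direction is automatic from the tiling and the irrational forward case is immediate from the minimal set; the only genuinely delicate point is the rational case, where one must verify that a periodic orbit can be routed through the climbing intervals rather than being trapped in a flat interval. This is where I would spend the most care, via the flat-interval collapse described above.
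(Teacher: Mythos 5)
Your strategy is close to the paper's: the monotone upper envelope $\widetilde f_u$ you introduce is exactly the ``water pouring map'' $\widetilde g$ of the printed proof (constant equal to $\widetilde f(\widetilde z_0)$ on $[\widetilde z_0,\widetilde w_0]$ and equal to $\widetilde f$ on $\widetilde J$), and your backward-orbit construction via the tiling $\bigcup_{m}\widetilde f(\widetilde J+m)=\mathbb R$ is identical to the paper's. Where you genuinely diverge is the forward direction. The paper makes no attempt to keep the forward orbit inside the climbing intervals: it starts from a point whose forward $\widetilde f$-rotation number already equals $\sup\rho(\widetilde f)$, which exists by the pointwise realization result $\rho_{\mathrm{pp}}(\widetilde f)=\rho(\widetilde f)$ for two-turn maps (Proposition 2.3 of \cite{Boyland_1986_bifurcations}, recalled in Section 3), pulls it back once into a translate of $\widetilde J$, and uses the envelope only to compute the backward rotation number. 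You instead re-derive the realization of $\sup\rho(\widetilde f)$ by manufacturing a forward orbit that never leaves $\bigcup_n(\widetilde J+n)$. That is more self-contained, but it is also where your argument has a hole.

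The hole is the rational case. Collapsing the flat intervals does give a degree-one circle homeomorphism $\bar f_u$ of rotation number $p/q$, but a $q$-periodic orbit of $\bar f_u$ passing through the collapsed class $[F]$, $F=[\widetilde z_0,\widetilde w_0]$, does not pull back to a genuine $\widetilde f_u$-orbit by ``choosing the endpoint representative'': one only gets $f_u^{\,q}(\widetilde w_0)\in F$, not $f_u^{\,q}(\widetilde w_0)=\widetilde w_0$, and that return point may lie in the interior of $F$, where $\widetilde f$ and $\widetilde f_u$ disagree; nothing in your argument excludes this, so the constructed forward $\widetilde f_u$-orbit need not be an $\widetilde f$-orbit. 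The clean repair is to drop periodicity altogether and use nested preimages: the sets $C_N=\{x\in J: f_u^{\,n}(x)\in J \text{ for all } 0\le n\le N\}$ are closed, nested, and nonempty (pull any point of $C_{N}$ back into $J$, which is possible because $f_u(J)$ covers the circle), so $\bigcap_N C_N\neq\emptyset$ by compactness; any point of this intersection has its entire forward $\widetilde f_u$-orbit, hence forward $\widetilde f$-orbit, in the climbing intervals, uniformly in the rational and irrational cases. Alternatively, simply quote the pointwise realization result as the paper does. With either repair your proof is correct.
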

\begin{proof}
For endomorphisms with two turns, since the definitions 
(\ref{eq:rho}) and (\ref{point_wise_definition}) are equivalent, 
there always exists some point $y_0$ with a lift $\widetilde y_0$, 
 such that,
 \begin{equation}\label{forward_rot}
 \sup \rho(\widetilde f)=\rho(\widetilde f, y_0)
 =\lim_{n\to +\infty}\frac 1n (\widetilde f^n(\widetilde y_0)-\widetilde y_0).
 \end{equation} 
 Under the assumptions, 
the image of $\widetilde J$ is the interval 
$[\widetilde f(\widetilde z_0),\widetilde f(\widetilde z_0+1)]$, 
which has length $1$, because $f$ has degree $1$.
It follows that
$\underset{m\in  \mathbb{Z}}{\bigcup}\widetilde f(\widetilde J+m)= \mathbb{R}$. 
Thus for any $\widetilde z\in  \mathbb{R}$, there exists an integer $m$, 
such that 
$\widetilde f^{-1}(\widetilde z)\cap (\widetilde J+m)\neq \emptyset$. 
In particular, 
for $\widetilde y_0 \in \widetilde J$, 
we can choose a sequence of its backward iterates by $\widetilde f$, 
each of which lies in the translates of $\widetilde J$. Therefore, up to taking one backward iterate, 
we can choose backward iterates
$\{\widetilde y_{-k}\}_{k\geq 0}$
such that for all $k\geq 0$, 
$\widetilde y_{-k}$  
belongs to the translates of $\widetilde J$, 
and $\widetilde f(\widetilde y_{-(k+1)})
=\widetilde y_{-k}$.
Now we modify our dynamics. 
Define the function 
$\widetilde g: \mathbb{R}\to  \mathbb{R}$, which 
coincides with $\widetilde f$ when restricted to $\widetilde J=[\widetilde w_0,\widetilde z_0+1]$, and 
takes constant value 
$\widetilde f(\widetilde z_0)$ when restricted to the interval $[\widetilde z_0, \widetilde w_0]$ (the map $\widetilde{g}$ is the so-called ''water pouring map``, see \cite{ALM}, page 143 for more details).
Note that, $\widetilde g$ is a monotone function, so its rotation number is uniquely defined,
independent of any starting point. 
Since the backward iterates of $\widetilde y_0$ all lie in translates of $\widetilde J$, we know in particular that the backward rotation number of $\widetilde y_0$ exists
and it is equal to $\rho (\widetilde g)$. Clearly, it follows that 
the forward rotation number of $\widetilde y_0$ equals the backward rotation number of $\widetilde y_0$.
The proof is complete now.
\end{proof}

\begin{remark}\label{climbing_properly_parellel}
Lemma~\ref{one_turn} will be used for showing accessibility of certain 
points in BBM embeddings of some circle-like attractors, and this in turn will serve as a way of 
determining their exterior prime ends rotation numbers. 
The definition of the efficient climbing interval is designed for this purpose. 
Note that, with the definition of lower climbing intervals, 
one can similarly study the interior prime 
ends rotation numbers, and obtain a similar statement of Lemma~\ref{one_turn}. 
We omit the repetition of the proofs. 
However, we will use both notions in 
Subsection~\ref{Walker}.
\end{remark}

\section{A parametric family near the identity}\label{lakes_wada}
In this section we prove Theorem~\ref{parametrised}.
Let us first introduce the following notion, which will be used in the present and the subsequent sections. 
Call a connected topological graph $G_k$ 
\emph{a chain of $k$ circles} if $G_k$ is a union of 
$k>1$ circles
$S_0\cup\ldots\cup S_{k-1}$ where for any $i,j\in \{0,1,\ldots, k-1\}$,
$S_i\cap S_j$ is a point if and only if $|i-j|=1$ and if $|i-j|>1$, then $S_i\cap S_j=\emptyset$.

\begin{proof}[Proof of Theorem \ref{parametrised}]
	We start with the case $n=3$, i.e. 
	we construct a $3$-separating Lakes of Wada continuum. \\

\noindent{\textbf{Step 1. Boundary Retracts and the choice of $\epsilon_0$}}.\\

Let $G_2$ be a chain of $2$ circles $S_0$ and $S_1$ 
with the same radius 
whose intersection contains a single point $x_0$. 
Then $G_2$ is the spine of a pair of pants $D_2$.
Topologically, 
$D_2$
is obtained from a closed topological disk, with boundary $C_2$, 
by removing two disjoint open disks from it, with boundaries $C_0$ and $C_1$ respectively, see Figure~\ref{fig_pants}.
Moreover, $G_2$ is the image of a boundary retract of $D_2$.
 More precisely, we can define the function 
 $\alpha: \partial D_2 \times [0,1] \to D_2$
 such that, for any $x\in \partial D_2$, 
 $\alpha(x, \cdot)$ restricted to $[0,1]$
 is one-to-one and $\alpha(x,0)=x$ 
 and $\alpha(x,1)\in G_2$. For our convenience, 
 we will always choose $\alpha$, such that 
 the image of the arc $\alpha(x, \cdot) ([0,1])$ is straight line segments connecting 
 $\partial D_2$ to $G_2$ (see Figure~\ref{fig_pants}). We refer these as the \emph{radial directions}.
 \begin{figure}[!ht]
	\begin{center}
		\includegraphics[width=8cm]{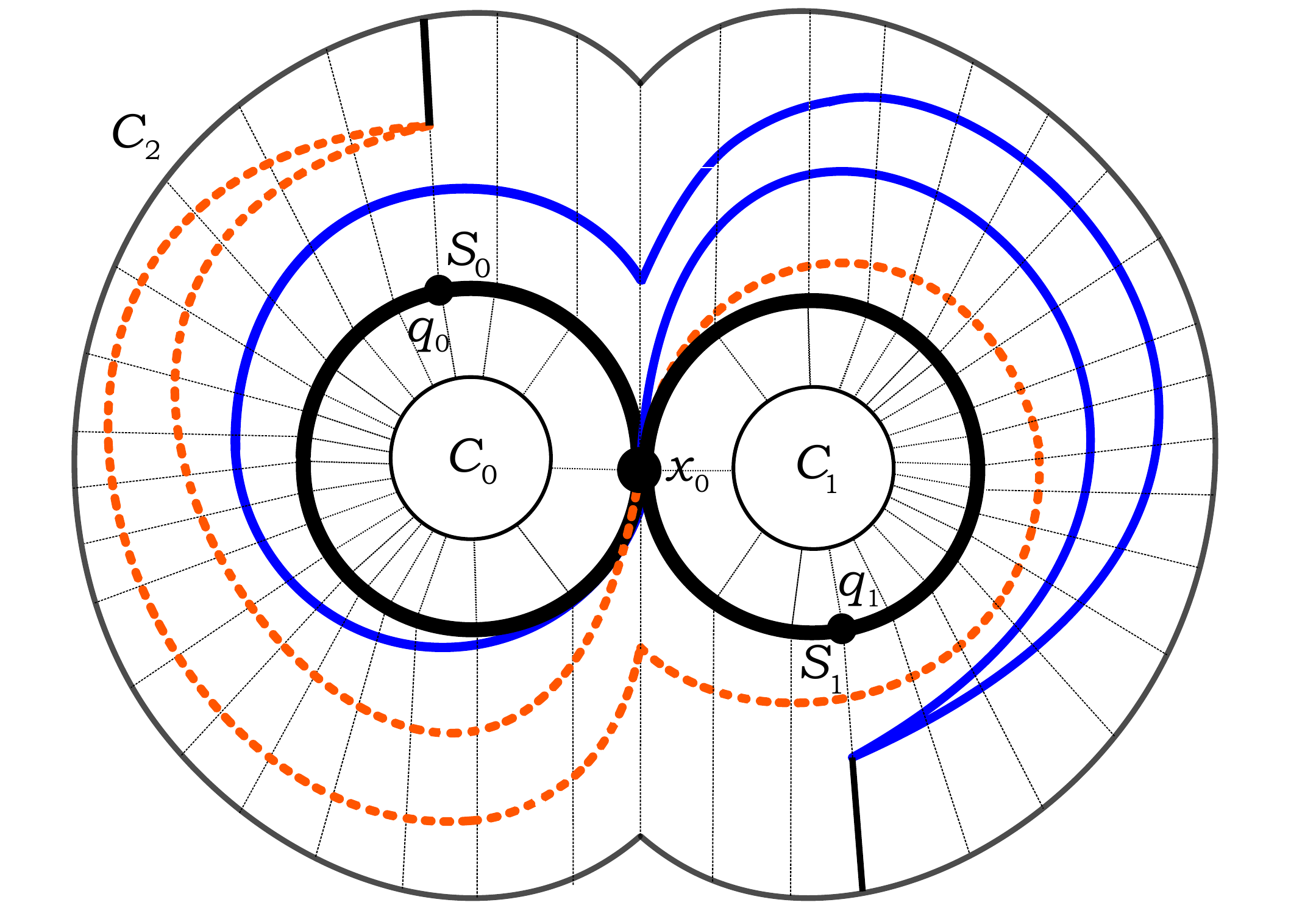}
		\put(-152,140) {$\overline{\phi}(Q_1)$}
		\put(-108,12) {$\overline{\phi}(Q_2)$}
		\caption{The radial decomposition of $D_2$ and planar representation of the map $\overline{\phi}_{\epsilon_0}$ and the arcs $\overline{\phi}_{\epsilon_0}(Q_0)$ and $\overline{\phi}_{\epsilon_0}(Q_1)$.
		The picture visualizes the idea behind the creation of arcs in the inverse limit that access certain points of $K_{\epsilon}$ from the complement of the attractor.} ~\label{fig_pants}
	\end{center}
\end{figure}

For each $i=0,1$, 
we parametrise each circle clockwise 
with 
$\beta_i:[0,1]\to S_i$ in the uniformly scaling way,
so that,
$\beta_i(t_i)=x_0$ if and only if $t_i\in\{0,1\}$. 
Fix $\epsilon \in(0,\frac 12)$. Now we define a continuous map 
$\phi_{\epsilon}: G_2\to G_2$ as follows. 
For $i=0, 1$:
\begin{enumerate}
\item Consider the arc $\beta_i([0, 1-2\epsilon])$. 
The restriction of
$\phi_\epsilon$ to this arc 
is a uniform scaling map, whose scaling factor is $\frac{1}{1-2\epsilon}$. 
Note that the image is 
the whole circle $S_i$. 
\item Consider the arc  
$\beta_i([1-2\epsilon, 1-\epsilon])$.  
The restriction of $\phi_\epsilon$ maps this arc 
to the arc  
$\beta_{1-i}([0, \frac{\epsilon}{1-2\epsilon}])$, in a uniform scaling way.
\item Consider the arc 
$\beta_i([1-\epsilon, 1])$. We require that 
$\phi_\epsilon$ maps this arc to the arc 
$\beta_{1-i}([\frac{\epsilon}{1-2\epsilon}, 0])$, in a uniform scaling way. 
Note that this restriction reverses the orientation.
\end{enumerate}

In particular, 
 for $\epsilon_0= 1-\frac{\sqrt 2}{2}$,
 the two points 
 $q_i=\beta_i(\frac {\sqrt 2}{2})$, with $i=0,1$, 
 form 
 a periodic orbit for $\phi_{\epsilon_0}$ whose period is $2$.
   We denote by $I:=[0,\epsilon_0]$. \\
  
\noindent {\textbf{Step 2. The Choice of the Smash Mapping and the 
Unwrapping}}. \\

Fix some $\epsilon \in (0,\epsilon_0]$. 
Let $\alpha : \partial D_2 \times [0,1] \to D_2$ 
be as given in the previous step.
Define 
$\gamma_\epsilon: [0,1]\to [0,1]$, such that $\gamma_\epsilon(s)=\frac{s}{1-\epsilon}$ for $s\in[0,1-\epsilon]$,
and $\gamma_\epsilon(s)=1$ for $s\in [1-\epsilon,1]$.
Then we 
define the smash mapping $\Upsilon_\epsilon: D_2\to D_2$ as follows. 
 For every 
$x\in \partial (D_2)$ and $s\in[0,1]$ let
\begin{equation}
\Upsilon_\epsilon(\alpha(x,s))=\alpha(x,\gamma_\epsilon(s)).
\end{equation}
Note that the ``smashing region'' for $\Upsilon_\epsilon$ 
is the set 
\begin{equation}\label{omega_epsilon_set}
\Omega_\epsilon: = \{ \alpha(x,s) \big | x\in \partial (D_2), s \in[1-\epsilon,1] \}.
\end{equation}
There is a natural homeomorphism 
$\varphi_\epsilon: D_2 \to \Omega_\epsilon$, 
defined by 
\begin{equation}
\varphi_\epsilon\big(\alpha(x,s) \big):=\alpha(x,\epsilon s+1-\epsilon).
\end{equation}
Note $\gamma_\epsilon$ is a near-homeomorphism of $[0,1]$, i.e., a uniform limit of homeomorphisms. 
It follows that 
$\Upsilon_\epsilon$ 
is a near-homeomorphism of $D_2$. 
The function $\Upsilon_\epsilon$ represents the {\em smash} 
 in the BBM construction and the smashing region $\Omega_\epsilon$ converges to $G_2$
 as $\epsilon$ tends to $0$. 
 In particular, 
 $\Omega_\epsilon$
 is sufficiently thin if $\epsilon$ is sufficiently small. 
 We stress again that 
 there is a difference between this setup and the original proof of 
 the Theorem 3.1 of~\cite{BCH}, 
 where the 
 definition of the smash mapping 
 $\Upsilon$ was fixed
 for the whole parametrised family. Nevertheless, 
 the parametrised family 
  $\{\phi_{\epsilon}\}_{\epsilon\in I} \subset \mathcal{C}(G_2,G_2)$ 
  unwraps in $D_2$. 
  We proceed 
 by describing the specific choice 
 of the definition of
 the unwrapping $\overline{\phi}_\epsilon$.
 
 \begin{definition}\label{overlinephi_definition}
 	Define $\overline{\phi}_\epsilon \in \mathcal C(D_2,D_2)$ as a near-homeomorphism, satisfying 
 	the following conditions.
 	\begin{enumerate}[(a)]
 		\item  $\overline {\phi}_\epsilon(x)=x$ for $x\in C_0\cup C_1$.
 		\item $\alpha(C_0\cup C_1,[0,1))$ is a homeomorphism onto its image. 
 		In particular, $\overline{\phi}_{\epsilon}|_{G_2}$ is a homeomorphism onto its image.
 		\item $\overline{\phi}_{\epsilon}({G_2})\subset \alpha(C_2,[0,1))$.
 		\item The restriction of $\overline{\phi}_{\epsilon}$ to $G_2$ satisfies
 		\begin{equation}
 		\phi_\epsilon = \Upsilon_\epsilon \circ \overline {\phi}_\epsilon \big|_{G_2}.
 		\end{equation}
 		\item 
 		Denote $J_{i,\epsilon}=\beta_i([\epsilon, 1-\epsilon])$, 
 		for $i=0,1$, 
 		and then denote $J_\epsilon=J_{0,\epsilon} \cup J_{1,\epsilon}$.
 		For any $y\in J_\epsilon$, let $x\in C_2$ be such that $\alpha(x,1)=y$. Denote by $z\in C_2$ a point so that $\alpha(z,1)=\phi_\epsilon(y)$.
 		Then $\overline{\phi}_\epsilon$
 		restricted to the 
 		radial arc $\alpha(x,[0,1])$ is a monotone map, 
 		whose image is contained in
 		a radial arc connecting $\phi_\epsilon(y)$ to $z$. 
 	\end{enumerate}
 \end{definition}
 See Figure~\ref{unwrapping_definition} for certain arcs and their images under $\overline{\phi}_\epsilon$.
 
 \begin{figure}[!ht]
 	\begin{center}
 		\includegraphics[width=8cm]{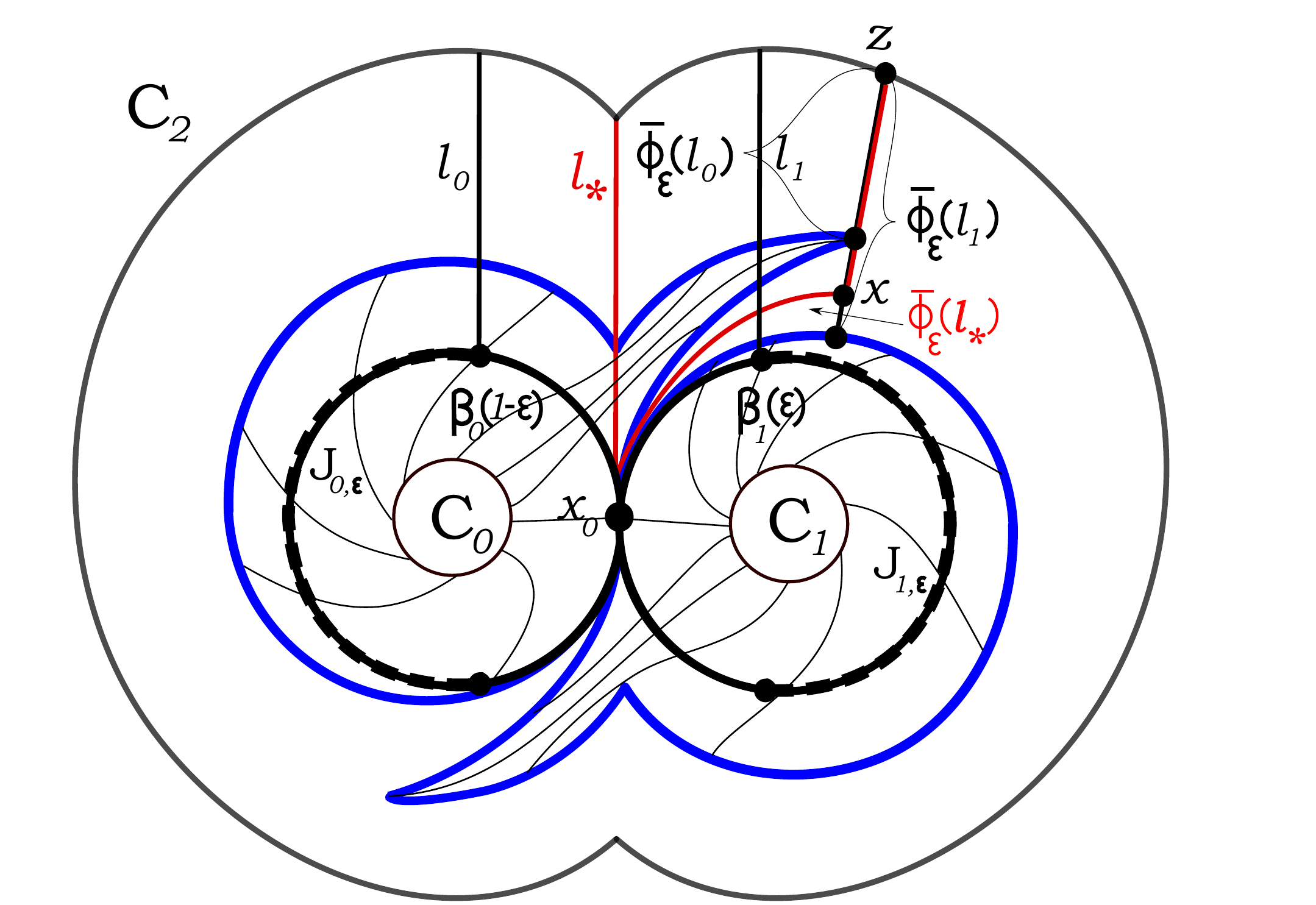}
 		
 		\caption{Picture explains the unwrapping $\overline{\phi}_\epsilon$. 
 			The graph of $\overline{\phi}_\epsilon (G_2)$ is drawn in blue.
 			The arc $\ell_0$ (respectively $\ell_1$) is the radial arc connecting $\beta_0(1-\epsilon)$ (respectively $\beta_1(\epsilon)$) and its corresponding radial point in $C_2$. 
 			The arc $\ell_\ast$ is the upper of the two radial arcs connecting $x_0$ to its corresponding radial point in $C_2$. 
 			By definition, we require that 
 			$\overline{\phi}_\epsilon(\ell_0)$ is the arc connecting the point 
 			$\overline{\phi}_\epsilon(\beta_0(1-\epsilon))$ radially to $z\in C_2$.
 			The arc $\overline{\phi}_\epsilon(\ell_1)$ is the radial arc 
 			from $\overline{\phi}_\epsilon(\beta_1(\epsilon))$ 
 			to $z$, and contains the arc $\overline{\phi}_\epsilon(\ell_0)$.
 			The arc $\overline {\phi}_\epsilon(\ell_\ast)$ is the concatenation of two arcs. 
 			The first arc connects $x_0$ to $x\in \overline{\phi}_\epsilon(\ell_1)\setminus \overline{\phi}_\epsilon(\ell_0)$ and is depicted in red here. The second arc connects $x$ to $z$ and is a subarc of $\overline{\phi}_\epsilon(\ell_1)$. Intervals $J_{0,\epsilon}$ and $J_{1,\epsilon}$ are drawn on the picture by dashed lines.}
 		\label{unwrapping_definition}
 	\end{center}
 \end{figure}

 \begin{remark}
 	In item (e) of the Definition~\ref{overlinephi_definition}, the choice of the interval $J_\epsilon$ is 
 	related to the efficient climbing interval that we have defined in 
 	Section~\ref{endo_rotation_lemmas} for circle endomorphisms with two turns. 
 	Rigorous proof that $\overline {\phi}_\epsilon$
 	is indeed a near-homeomorphism, is left to the reader.
 	Let us note that $\overline{\phi}_{\epsilon}$ can be defined to be a homeomorphism as well (see Figure~\ref{fig:unwrappingHomeo}), but for our study of prime ends it is more convenient to use a near-homeomorphism that we define above.
 \end{remark}

\begin{figure}[!ht]
	\begin{center}
		\includegraphics[width=8cm]{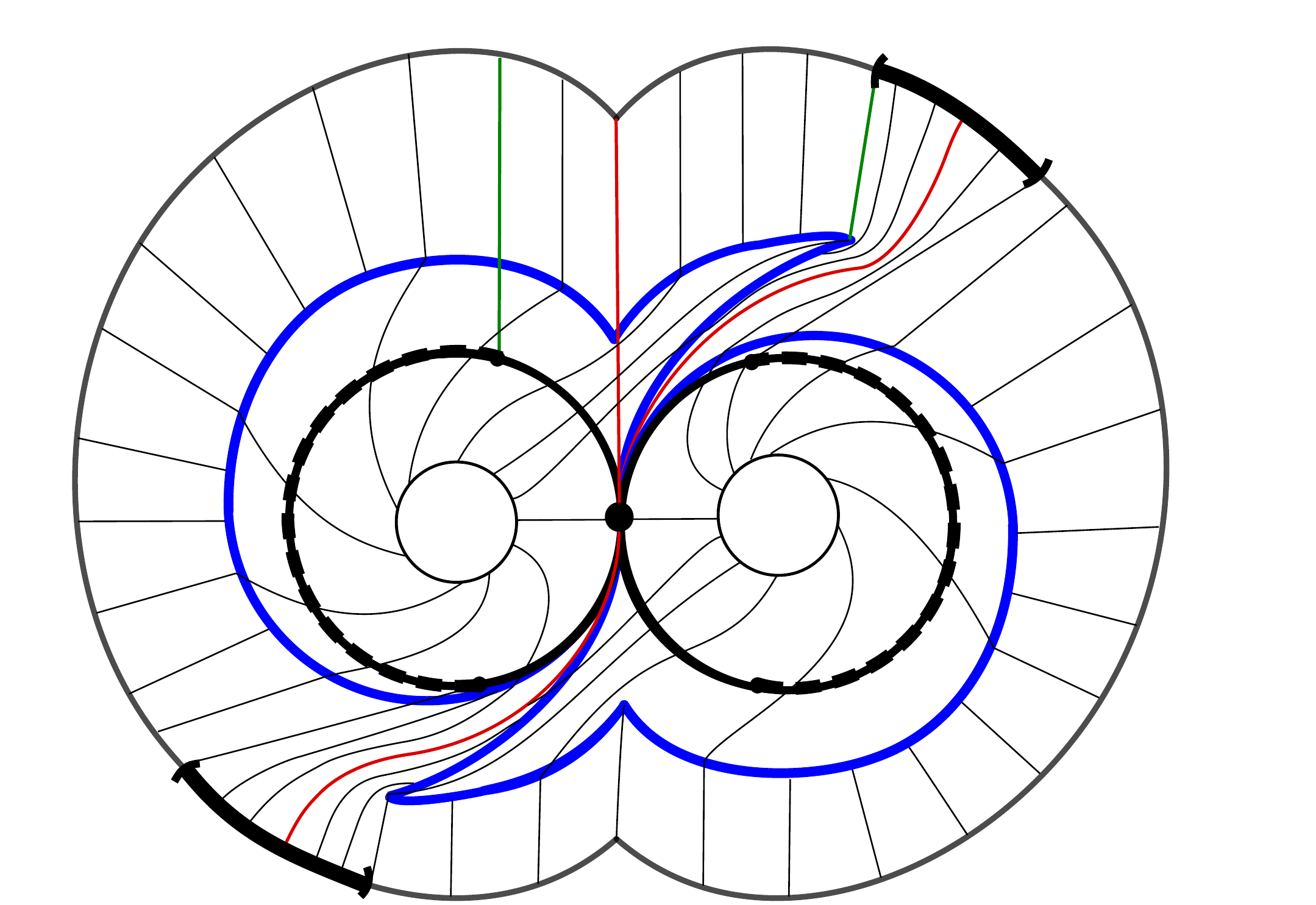}
\end{center} 
\caption{This picture explains that an unwrapping of $\phi_{\epsilon}$ can be a homeomorphism of $D_2$ as well. However, in this case it becomes much harder to determine accessible points of $K_{\epsilon}$. In particular, the proof of Theorem~\ref{parametrised} that we present here does not work, since radial arcs do not get mapped to radial arcs.}
\label{fig:unwrappingHomeo}
\end{figure}

 As we already remarked in the proof of Lemma~\ref{BBM}, 
although the unwrapping 
$\overline{\phi}_\epsilon$ is by the definition 
only a near-homeomorphism, the conclusions of 
Lemma~\ref{BBM} still hold and were used in this form in e.g. \cite{Boy} for studying unimodal inverse limit spaces as attractors of sphere homeomorphims.
 Thus, similarly as in the beginning of page 1081 of the paper \cite{BCH} (but
 with our choice of $\Upsilon_\epsilon$), 
 we can define 
 the mapping 
 \begin{equation}\label{fepsilon_bar}
 \overline{f}_\epsilon: = \varphi_\epsilon \circ \overline {\phi}_\epsilon\circ \varphi_\epsilon^{-1}.
 \end{equation} 
 Then we extend it to the whole $D_2$ radially. 
 Finally, 
consider the composition 
\begin{equation}\label{definition_of_psi}
\psi_\epsilon:= \Upsilon_\epsilon \circ \overline{f}_\epsilon,
\end{equation} 
and denote 
the induced shift homeomorphism by 
 \begin{equation}
 \Phi_\epsilon: = \sigma_{\psi_\epsilon}.
 \end{equation}
By Brown's approximation theorem from \cite{Brown},
the inverse limit space 
 $\underleftarrow{\lim} \{D_2,  \psi_\epsilon\}$
is homeomorphic to $D_2$ itself, via a homeomorphism $h_\epsilon$. 
Note that it follows from item (b) of 
Definition~\ref{overlinephi_definition} that 
$\phi_\epsilon\big|_G= \psi_\epsilon \big |_G$. 

From now on, we will identify via $h_\epsilon$ the spaces 
$\underleftarrow{\lim} \{D_2,  \psi_\epsilon\}$ with $D_2$. 
Denote $K_\epsilon:=h_{\epsilon}(\underleftarrow{\lim}\{G, \phi_\epsilon\})$. 
Then, in order to shorten the cumbersome notations,
we will neglect the homeomorphism $h_{\epsilon}$ and work 
with $K_{\epsilon}$ by identifying it with $\underleftarrow{\lim}\{G, \phi_\epsilon\}$.
Finally, we observe that $\psi_\epsilon$
extends to $ \mathbb{S}^2$, and we are not concerned about 
what happens outside 
 $D_2$.
So in what follows, it suffices for us to 
only consider $\psi_\epsilon$ and $\Phi_\epsilon$ restricted to 
$D_2.$ \\

 \noindent{\textbf{Step 3. Main arguments of the proof.}}\\
 
 We start to check the assertions of Theorem~\ref{parametrised}.
 As explained in the previous paragraph, 
 by Lemma~\ref{BBM}, 
 the $\Phi_\epsilon$-invariant continuum 
  $K_\epsilon$ is an attractor, 
  and $\Phi_{\epsilon}|_{K_{\epsilon}}$ 
  is topologically conjugate to the shift homeomorphism 
  $\sigma_{\phi_{\epsilon}}$ for every $\epsilon \in I\setminus \{0\}$.
  Furthermore, by \text{(b)} from Lemma~\ref{BBM},
  $K_{\epsilon}$ vary continuously 
  in the Hausdorff distance with parameter $\epsilon\in I$. 
  This shows item (i) of Theorem~\ref{parametrised}.
    
 Next, in order to understand the prime ends rotation number, we need to
 first  study 
 accessible points of these attractors. First we show how this is done when $\epsilon=\epsilon_0$.
\begin{lemma}\label{claim1}
Recall the choice of $q_0$ and $q_1$ at the end of Step 1 of the proof. 
Let $\underbar{q}:=(\ldots,q_0,q_1, q_0),
      \underbar{q}':=(\ldots,q_1,q_0,q_1) 
      \in K_{\epsilon_0}$. 
      Then, the points 
      $\underbar{q}$ and $\underbar{q}'$ 
      are accessible points of $K_{\epsilon_0}$. 
\end{lemma}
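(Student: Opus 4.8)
The plan is to exhibit, for each of $\underline q$ and $\underline q'$, an arc in $\underleftarrow{\lim}\{D_2,\psi_{\epsilon_0}\}\cong D_2$ that has the given thread as one endpoint and otherwise lies entirely in the complement of $K_{\epsilon_0}$. I will use throughout the basic observation that, since $K_{\epsilon_0}=\underleftarrow{\lim}\{G_2,\phi_{\epsilon_0}\}$, a thread $(\ldots,x_{-1},x_0)$ of $\underleftarrow{\lim}\{D_2,\psi_{\epsilon_0}\}$ belongs to $K_{\epsilon_0}$ if and only if every coordinate $x_{-k}$ lies on $G_2$; hence any thread with even one coordinate in $D_2\setminus G_2$ automatically lies in the complement of the attractor.

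First I would build the accessing arc coordinate by coordinate. Write $x_{-k}=q_0$ for $k$ even and $x_{-k}=q_1$ for $k$ odd, which are exactly the coordinates of $\underline q$; recall $\phi_{\epsilon_0}(q_0)=q_1$ and $\phi_{\epsilon_0}(q_1)=q_0$. Take $A_0$ to be a short radial arc $\alpha(x_\ast,[1-\delta,1])$ terminating at $q_0$, so that $A_0\cap G_2=\{q_0\}$ and $A_0\setminus\{q_0\}\subset D_2\setminus G_2$. The key geometric input is item (e) of Definition~\ref{overlinephi_definition}: because $q_i=\beta_i(\tfrac{\sqrt2}{2})=\beta_i(1-\epsilon_0)$ is an endpoint of the interval $J_{i,\epsilon_0}=\beta_i([\epsilon_0,1-\epsilon_0])$, the unwrapping carries a radial arc terminating at $q_i$ monotonically into the radial arc terminating at $\phi_{\epsilon_0}(q_i)=q_{1-i}$; since the smash $\Upsilon_{\epsilon_0}$ also respects the radial decomposition, the bonding map $\psi_{\epsilon_0}=\Upsilon_{\epsilon_0}\circ\overline f_{\epsilon_0}$ does likewise. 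Using injectivity of $\overline\phi_{\epsilon_0}|_{G_2}$ (item (b)) to select the correct radial branch, I would define inductively a non-degenerate radial arc $A_{-(k+1)}$ terminating at $x_{-(k+1)}$, with $A_{-(k+1)}\cap G_2=\{x_{-(k+1)}\}$, which $\psi_{\epsilon_0}$ maps monotonically onto $A_{-k}$.

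With these choices $\{A_{-k}\}_{k\ge0}$ is an inverse sequence of non-degenerate arcs whose bonding maps $\psi_{\epsilon_0}\colon A_{-(k+1)}\to A_{-k}$ are monotone surjections, so $\Lambda:=\underleftarrow{\lim}\{A_{-k},\psi_{\epsilon_0}\}$ is an arc contained in $\underleftarrow{\lim}\{D_2,\psi_{\epsilon_0}\}$; it is non-degenerate because $\pi_0(\Lambda)=A_0$. Since $\psi_{\epsilon_0}$ sends the $G_2$-endpoint $x_{-(k+1)}$ to the $G_2$-endpoint $x_{-k}$, the thread of these endpoints is precisely $\underline q$, and it is an endpoint of $\Lambda$. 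Every other thread of $\Lambda$ has some coordinate in $A_{-k}\setminus\{x_{-k}\}\subset D_2\setminus G_2$, hence lies outside $K_{\epsilon_0}$ by the observation above. Thus $\Lambda$ is an arc accessing $\underline q$ from the complement, which proves accessibility of $\underline q$; running the identical construction with $x_0=q_1$ yields an accessing arc for $\underline q'$.

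I expect the main obstacle to be the inductive construction of the level arcs: one must check that $\psi_{\epsilon_0}=\Upsilon_{\epsilon_0}\circ\overline f_{\epsilon_0}$, and not merely the unwrapping $\overline\phi_{\epsilon_0}$, genuinely admits at $q_1$ (resp. $q_0$) a radial branch carrying a suitable arc monotonically onto $A_{-k}$ while keeping interiors off $G_2$. Here the placement of $q_0,q_1$ at the endpoints of the efficient climbing intervals $J_{i,\epsilon_0}$, built into item (e), is what tames the turning-point (folding) behavior of $\phi_{\epsilon_0}$ at these points: the fold is resolved by the injective unwrapping before the smash reintroduces it, so suitable preimage arcs exist. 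Tracking the radial decomposition through both the unwrapping and the smash is the delicate but routine part of the argument, and it is exactly the mechanism sketched in Figure~\ref{fig_pants}.
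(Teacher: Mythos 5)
Your proposal is correct and follows essentially the same route as the paper: the paper likewise takes the radial arcs $Q_0,Q_1$ attached at $q_0,q_1$, observes that $\psi_{\epsilon_0}$ carries each onto the other (exactly the consequence of item (e) of Definition~\ref{overlinephi_definition} you invoke), forms the inverse limit arc $\underbar Q=(\ldots,Q_1,Q_0,Q_1)$, and notes that any thread of $\underbar Q$ other than $\underbar q'$ has a coordinate off $G_2$ and hence lies outside $K_{\epsilon_0}$. Your inductive choice of (possibly shorter) preimage arcs in place of the paper's direct assertion $\psi_{\epsilon_0}(Q_i)=Q_{1-i}$ is only a cosmetic difference.
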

\begin{proof} 
Choose the smash mapping $\Upsilon_{\epsilon_0}$ and 
the unwrapping 
 $\overline{\phi}_{\epsilon_0}$, and obtain 
 the near-homeomorphism $\psi_{\epsilon_0}$
  as in Step 2. 
 Denote 
 $Q_0,Q_1\subset D_2$ for the 
 radial arcs from $q_0,q_1$ to the boundary component 
 $C_2$, respectively. Then the following holds:
\begin{align}
 \psi_{\epsilon_0}(Q_0)
     =Q_1.\\
 \psi_{\epsilon_0}(Q_1)=Q_0.
\end{align}

Let us focus on $i=1$, 
and consider
 the inverse limit set 
\begin{equation}
\underbar{Q}:= \big(\ldots, Q_1, Q_0, Q_1\big).
\end{equation} 
Note that, 
$\psi_{\epsilon_0} \big|_{Q_i}$ 
is a near-homeomorphism for every $i=0,1$. 
This shows that $\underbar{Q}$ 
is an arc in the inverse limit 
space $\underleftarrow{\lim}\{D_2, \psi_{\epsilon_0} \}$, 
which is identified with $D_2$.
Moreover, for any $\underbar x\in \underbar{Q}\backslash \{\underbar q'\}$, by definition, 
for some $k>0$, $\pi_{-k}(\underbar x)\notin G_2$. 
This implies that, 
for all $m\geq k$, $\pi_{-m}(\underbar x) \notin G_2$. Therefore, 
$\underbar{Q}
 \cap K_{\epsilon_0}=
 \underbar{q}'$, and so 
$\underbar{q}'$ is accessible by $\underbar{Q}$.
The argument for the point $\underbar{q}$  follows analogously.
\end{proof}

 For any fixed 
 $\epsilon\in (0, \epsilon_0]$, let $\bar{K}_{\epsilon}$ denote the union of 
the attractor $K_{\epsilon}$ with its two complementary domains with boundaries $C_0$ and $C_1$ respectively (i.e. we are considering filled disk $D_2$). 
 Then, $\bar{K}_{\epsilon}$ 
 is a plane non-separating continuum whose boundary 
 is just $K_{\epsilon}$. So we can talk about 
 the exterior prime ends rotation number of $\bar K_\epsilon$, 
 and denote it as $\rho_{\text{ex}}(\Phi_\epsilon,K_\epsilon)$.
 For the parameter $\epsilon_0$,  
 since we found two accessible periodic points $\underbar{q}_0$ and $\underbar{q}_1$ 
of $K_{\epsilon_0}$, Lemma~\ref{Luis_lemma} implies that   the exterior prime ends rotation number satisfies
  $\rho_{\text{ex}}(\Phi_{\epsilon_0}, K_{\epsilon_0})=\frac{1}{2} \text{ mod } \Bbb Z$.

 In fact, Lemma~\ref{claim1} is a simpler version of what we will do next. 
The new difficulty is that, we do not have accessible periodic orbit in general.
 
 Recall the choices of the smash mapping $\Upsilon_{\epsilon}$
 and the unwrapping $\overline{\phi}_\epsilon$ in Step 2. 
 Let us mark an arbitrarily point
  $q_\ast\in G_2$, and let the 
  radial arc $Q_\ast$ connect $q_\ast$ 
  to a point in $C_2$ where $Q_\ast\cap G_2=\{q_\ast\}$.
 We can then 
 choose a sequence of
 backward iterates, namely, $q_{-k}\in \phi_{\epsilon}^{-k}(q_\ast)$, 
 such that for all $k\geq 1$, 
 $q_{-k}\in J_\epsilon$ 
 (the set $J_\epsilon$ was defined 
 in item (d) of Definition~\ref{overlinephi_definition}). 
  Then we claim that the element $\underbar{q}_\ast := (\ldots,q_{-2},q_{-1},q_\ast)$ 
  is an accessible point of $K_\epsilon$.
 
To show the claim, we note that for any $k\geq 1$, 
$q_{-k}$ is the endpoint of a radial arc $Q_{-k}$, 
and $Q_{-k}\cap G_2=\{q_{-k}\}$. 
Observe also that,
for all $k\geq 1$,
$\psi_\epsilon$ restricted to $Q_{-k}$ is a near-homeomorphism onto $Q_{-(k-1)}$. 
Consider the inverse limit space, obtained by these arcs,
$\underbar{Q}_\ast:= (\ldots,Q_{-2},Q_{-1},Q_\ast)$. 
Then, $\underbar{Q}_\ast$ is an arc in the inverse limit space 
$\underleftarrow{\lim}\{D_2, \psi_{\epsilon} \}$, which in turn is homeomorphic via $h_\epsilon$
to $D_2$. 
Similar argument as in the proof of Lemma~\ref{claim1} shows that, 
$\underbar{Q}_\ast \cap K_\epsilon= \{ \underbar{q}_\ast \}$, and 
clearly $\underbar{Q}_\ast \backslash \{\underbar{q}_\ast\}$ is contained in 
the exterior complementary domain of $K_\epsilon$.
Thus,
$\underbar{q}_\ast$ is accessible by $\underbar{Q}_{\ast}$ from the exterior complementary domain of $\bar{K}_{\epsilon}$.

Now we are ready to show item (ii) and (iii) of Theorem~\ref{parametrised}.
Observe first the case $\epsilon=0$. 
Define $\phi_0=\text{id}$, and then $K_0=G_2$ and all the construction becomes trivial, in 
the sense that the smash mapping and the unwrapping are all identity (note that we do not have an attractor at this instance yet). 
In particular, $\rho_{\text{ex}}(\Phi_{0}, K_{0})=0 \text{ mod } \Bbb Z$. 
Then by Proposition~\ref{Barge}, we note 
the exterior 
prime ends rotation numbers
$\rho_{\text{ex}}(\Phi_\epsilon, K_\epsilon)$ vary continuously with $\epsilon\in I$.

Both the smash mapping and the unwrapping 
can be defined in a symmetric way, 
with respect to $S_0$ and $S_1$, due to symmetricity in the definition of the map $\phi_\epsilon$.
Let us consider the continuum 
$\text{Fill}(S_0\bigcup S_1)$, 
obtained by 
the union of $G_2$ with
the interior disks bounded by two circles $S_0$ and $S_1$. 
The boundary $G_2$ 
of the continuum $\text{Fill}(S_0\bigcup S_1)$
can be for our purposes regarded as a single circle
\footnote{Rigorously, 
the circle $\Bbb T^1$ is exactly the exterior prime ends circle of the continuum 
$\text{Fill}(S_0\cup S_1)$.
Therefore, the point $x_0\in S_0
\cap S_1$ corresponds to two 
antipodal points in the circle $\Bbb T^1$. }, denoted by $\Bbb T^1$.
Naturally, the map $\phi_\epsilon$ 
induces a map $\phi_\epsilon^{\ast}$ on $\Bbb T^1$. 
Observe that, 
$\phi_\epsilon^{\ast}$ is a 
two cover of a circle endomorphism 
with two turns 
(see the notation in Section~\ref{endo_rotation_lemmas}). 


Fix any 
$\epsilon\in (0,\epsilon_0]$, and
choose a proper lift $\widetilde{\phi}_\epsilon^{\ast}$.
We see the interval $J_\epsilon$ corresponds to 
exactly two copies of the efficient climbing intervals for the lifted map. 
The restriction of $\widetilde{\phi}_\epsilon^{\ast}$
to $[0,\frac 12]$ consists of one increasing interval and 
one decreasing interval. 
We now apply 
Lemma~\ref{one_turn} to the lifted 
map 
$\widetilde{\phi}_\epsilon^{\ast}$, 
obtaining 
some point 
$p\in J_\epsilon$, such that 
for any lifted point $\widetilde p$ corresponding to $p$, we can choose its backward iterates $\widetilde p_{-k}\in (\widetilde{\phi}_\epsilon^{\ast})^{-k}(\widetilde p)$, with the following properties. 
\begin{enumerate}
\item Each $\widetilde p_{-k}$ belongs to $\pi^{-1}(J_\epsilon)$ (recall the definition of $\pi$ in the paragraph preceding Lemma~\ref{Luis_lemma}).
\item The backward rotation number equals the forward rotation number, which realises
the upper endpoint of the rotation interval $\rho\big( \widetilde{\phi}_\epsilon^{\ast} \big)$:
\begin{equation}
\lim_{n\to +\infty} \frac{1}{n} \big( (\widetilde {\phi}_\epsilon^{\ast})^n(\widetilde p) -\widetilde p \big) 
= \lim_{n\to +\infty}\frac{1}{n} \big( \widetilde p -  \widetilde p_{-k}\big)=\sup \rho\big( \widetilde{\phi}_\epsilon^{\ast} \big).
\end{equation}
\end{enumerate}

 Then, 
 as we have already shown, 
 the point $(\ldots,p_{-2},p_{-1},p) \in K_\epsilon$ 
is a point accessible from the complement of $\bar{K}_{\epsilon}$. 
Thus, by Lemma~\ref{Luis_lemma}, we can consider the lifted prime ends rotation number 
$\widetilde{\rho}_{\text{ex}}(\widetilde \Phi_\epsilon,K_\epsilon)$, and it follows that 
\begin{equation}
    \widetilde{\rho}_{\text{ex}}(\widetilde \Phi_\epsilon,K_\epsilon)
=  \sup \rho\big( \widetilde {\phi}_\epsilon^{\ast} \big).
\end{equation} 
Now, the monotonicity of the function $\frac{1-\epsilon}{1-2\epsilon}$ implies the monotonicity of 
$\widetilde {\phi}_\epsilon^{\ast}$.
Therefore, we conclude that $\rho_{\text{ex}}(\widetilde \Phi_\epsilon,K_\epsilon)$ is a strictly increasing function as $\epsilon\to \epsilon_0$.  
This combining with continuity of the function shows item (ii) of Theorem~\ref{parametrised}.

Now, for any $\epsilon\in(0,\epsilon_0]$, we can choose 
some parameter $\epsilon'<\epsilon$, such that 
there exists some periodic orbit 
$p'\in J_{\epsilon'}$ with positive rational rotation number. 
Thus, 
$\widetilde{\rho}_{\text{ex}}(\widetilde \Phi_\epsilon,K_\epsilon)\geq 
\widetilde{\rho}_{\text{ex}}(\widetilde \Phi_{\epsilon'},K_{\epsilon'})>0$ 
for the proper lift. 
So $\rho_{\text{ex}}(\Phi_\epsilon,K_\epsilon)\neq 0 \text{ mod } \Bbb Z$ for $\epsilon\in (0,\epsilon_0]$, 
and thus the $K_\epsilon$ is a rotational attractor. 

We next argue that attractors $\{K_{\epsilon}\}_{\epsilon\in I}$ 
are indeed Lakes of Wada continua. 
Denote by $U_{\text{out}}$ the complementary domain of 
$G_2$ bounded by $C_2$ and by $U_0$ and $U_1$
 the complementary domains bounded by the circles $C_0$ and $C_1$, respectively. 
Suppose $\underbar x\in \underleftarrow{\lim}\{D_2, \psi_\epsilon\}$.
If for $\tau \in\{\text{out}, 0, 1\}$, and for a positive integer $m$, 
we have $\pi_{-m}(\underbar x)\in U_\tau$, 
then it can be verified by the definition of $\psi_\epsilon$ in (\ref{definition_of_psi}), that, 
for all $k\geq m$, 
$\pi_{-k} (\underbar x) \in U_\tau$.

Thus, for $\tau \in\{\text{out}, 0, 1\}$,
 and $m\geq 0$, one defines the following. 
\begin{align}
U_{\tau,m,\epsilon} 
&: =\{\underbar{x}\in\underleftarrow{\lim}\{D_2, \psi_\epsilon\} \big| m \text{ is the least  integer such that }
\pi_{-m}(\underbar{x})\in U_{\tau} \}.
\end{align}
\begin{align}
{L}_{\tau,\epsilon} & :=\bigcup_{m=1}^{\infty}{U}_{\tau,m,\epsilon}.
\end{align}
Clearly, 
\begin{equation}
\underleftarrow{\lim}\{D_2, \psi_\epsilon\} 
= K_\epsilon\sqcup L_{\text{out},\epsilon} \sqcup L_{0,\epsilon}\sqcup L_{1,\epsilon},
\end{equation}
where  
$L_{\text{out},\epsilon}, L_{0,\epsilon},
L_{1,\epsilon}$ 
 are three disjoint $\Phi_\epsilon$-invariant domains. 
Now, for any 
$\underbar y=(\ldots,y_{-1},y_0) \in K_\epsilon$, 
choose arbitrarily $\tau\in\{\text{out},0,1\}$, and integer $k\geq 1$. 
We pick  
$\underbar x\in U_{\tau,k+1,\epsilon}$, 
such that, for all 
$j\leq k$, $\pi_{-j}(\underbar x)=\pi_{-j}(\underbar y)$, 
and $\pi_{-(k+1)}(\underbar x)\in U_\tau$. 
By the arbitrary choice of $k\geq 1$, 
and by the topology of $\underleftarrow{\lim}\{D_2, \psi_\epsilon\}$, 
this shows that
for any small neighbourhood $B$ of $\underbar y$ in $\underleftarrow{\lim}\{D_2, \psi_\epsilon\}$, there is some point
$\underbar x\in L_{\tau,\epsilon}\cap B$.
In other words, 
$\underbar y$ is a boundary point of the domain $L_{\tau,\epsilon}$. Thus $K_\epsilon$ is the common boundary of each of 
the domains $L_{\text{out},\epsilon},L_{0,\epsilon}$ and $L_{1,\epsilon}$.
The proof of item (iii) is thus complete.

Now we address (iv). For any $\delta>0$, 
we can choose sufficiently small $\epsilon$, with $d_{C^0}(\phi_\epsilon,\text{id})<\delta.$
Thus, by the construction of both $\Upsilon_\epsilon$ and $\overline{\phi}_\epsilon$ from 
Step 2, by reducing $\epsilon$ one more time if necessary, we can ensure 
that for all sufficiently small 
$\epsilon>0$, $d_{C^0}( \Phi_\epsilon,\text{id})\leq \delta,$ which proves item (iv).

To address (v), note that for every $\epsilon\in I$,
the topological entropy of $\phi_\epsilon$ is $\log \frac{1}{1-2\epsilon}$, since $\phi_{\epsilon}$ is a piecewise monotone map with constant slope $\pm \lambda=\pm \frac{1}{1-2\epsilon}$ (see~\cite{Misiurewicz_entropy}). 
This is also equal to $h_\text{top}(\Phi_\epsilon \big|_{K_\epsilon})$, since the natural extension of an endomorphism
has the same 
topological entropy, see \cite{Li}.
In particular, 
$h_\text{top}(\Phi_\epsilon \big|_{K_\epsilon})$ decreases and converges to $0$ as 
$\epsilon \to 0$.

Lastly, we check (vi). 
Choose a point $z_0$ contained in the intersection of $U_{\text{out}}$
with $\psi_\epsilon^{-1}(G_2)$, 
and choose some pre-image $z_{-1}\in \psi_\epsilon^{-1}(z_0)$. We can find 
an arc  $\lambda$ connecting $z_{-1}$ to $z_0$, which is contained in $U_{\text{out}}$. 
Inductively, for any $k\geq 1$, 
we can define an arc
$\lambda_{-(k+1)}\subset U_{\text{out}}$ connecting some point $z_{-(k+1)}$ and $z_{-k}$, 
such that $\psi_\epsilon(\lambda_{-(k+1)}) =  \lambda_{-k}$. 
Note that for all $k\geq 1$, $z_{-k}\in \psi_{\epsilon}^{-k}(z_0)$. 
 Now we consider 
the inverse limit set
$\underline{\lambda}:= (\cdots, \lambda_{-2}, \lambda_{-1},\lambda)$. 
Moreover, we define 
$\underline\Gamma$ as follows.
\begin{equation}
\underline \Gamma:= 
\bigcup_{n \in  Z} \Phi^n_\epsilon(\underline{\lambda})\subset L_{\text{out},\epsilon}.
\end{equation} 
From the above argument, it is clear that 
$\underline \Gamma$ 
is a translation line for $\Phi_\epsilon$.

Clearly, 
$\omega(\underline \Gamma)= K_\epsilon$.
Then, the filled set $\bar K_\epsilon$ is a 
rotational attractor, which is disjoint from $\underline \Gamma$. 
This shows item (vi).

We finally remark that
the cases for $n>3$ can be dealt with analogously. 
The only difference is that the maps 
$\phi_{\epsilon,n}: G_n\to G_n$ where $G_n=S_0 \cup S_1 \cup\ldots S_{n-1}$ 
are defined separately for the upper and lower halves 
of the circles $S_i$ where $i\in\{1,\ldots, n-2\}$ 
(see Figure~\ref{fig_three_circles}). We omit the details.
\end{proof}

\begin{remark}\label{nonexpansive}
Recall that a homeomorphism $h: \mathbb{R}^2\to  \mathbb{R}^2$ 
is \emph{expansive} if for some $c > 0$,
$\sup_{n\in \Bbb Z} \text{dist}(h^n(x),h^n(x))>c$ for any pair 
$x, y\in  \mathbb{R}^2$.
Here
$\Phi_\epsilon$ restricted to $K_\epsilon$ is not expansive.
We can choose two points $x\neq x'\in S_0$ 
so that $\phi_{\epsilon}(x)=\phi_{\epsilon}(x')\in S_1$. 
Note that such points exist for every $\epsilon>0$ and they can be chosen so that their distance on $S_0$ is arbitrarily small. 
Furthermore, we can choose 
$\mathrm{dist}(\phi^{-i}_{\epsilon}(x)-\phi^{-i}_{\epsilon}(x'))\to 0$ 
as $i\to \infty$. Therefore, the induced shift homeomorphism is not expansive. Similar argument works for the cases when $n>3$ as well.
\end{remark}

\section{Wada Lakes Rotational Attractors without fixed points}\label{with_no_fixedpoints}
In this section, we 
prove Theorem~\ref{thm:noFP}, which provides 
a family of 
Lakes of Wada rotational attractors
without fixed points in the boundary of the attractor. 

\begin{proof}[Proof of Theorem \ref{thm:noFP}]
Set $n=2$, so we aim to 
construct $4$-separating Lakes of Wada rotational attractor with no fixed points in its boundary. 
	Denote by $D_3$ a closed  topological disk minus a union of three disjoint open disks. 
	Let $G_3=S_0\cup S_1\cup S_2 \subset D_3 \subset \mathbb{S}^2$ 
	be a chain of three circles. Suppose 
	$S_0$ and $S_1$ intersect
	 at a single point $a_1$.
	Let $S_1$ and $S_2$ intersect at 
	a single point $a_3$.
	Denote points $q_4\in S_0$, $q_1, q_3\in S_1$ and $q_2\in S_2$ 
	as depicted in Figure~\ref{fig_three_circles}.
	
	\begin{figure}[!ht]
		\begin{center}			\includegraphics[width=11cm]{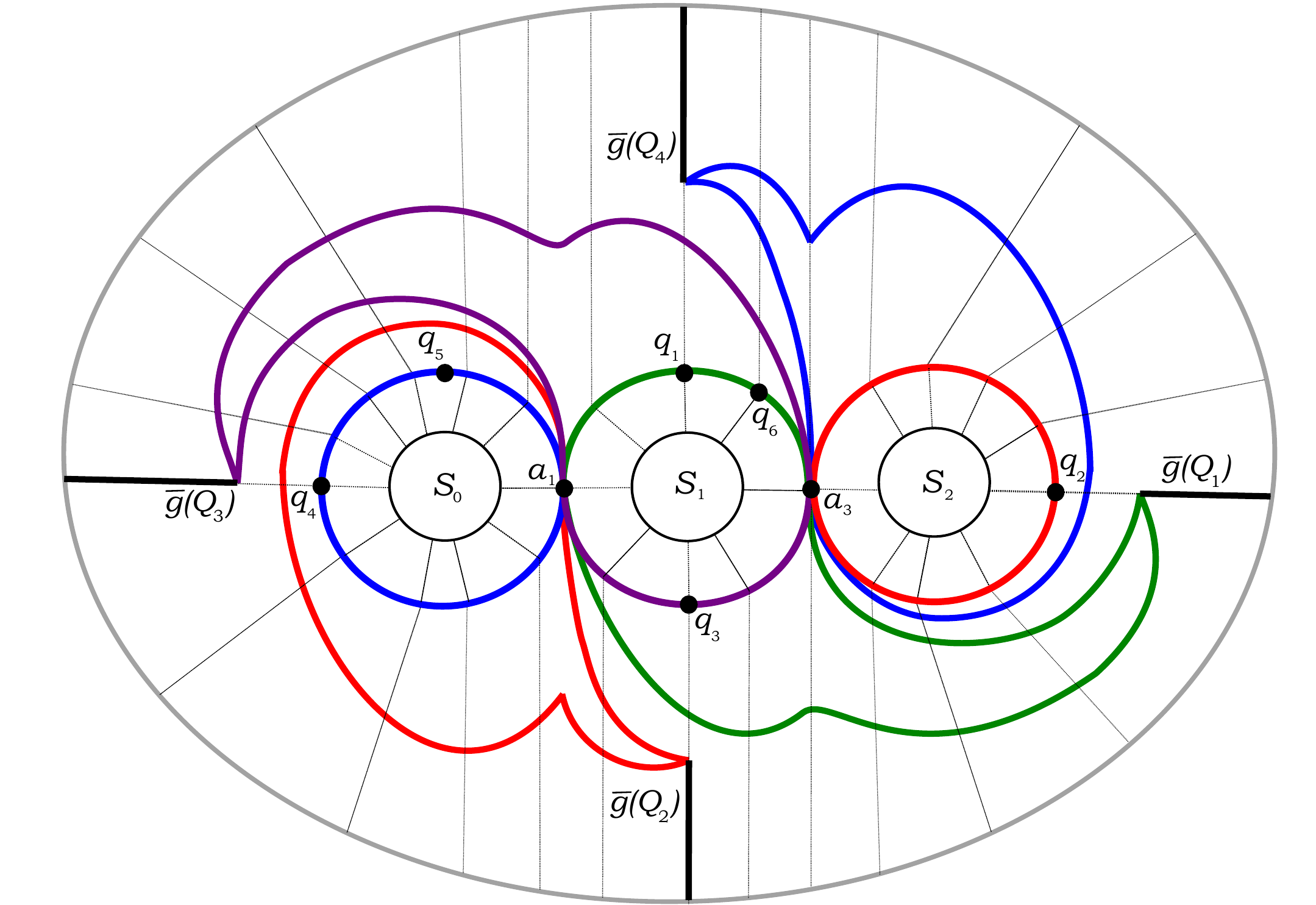}
			\caption{The unwrapping of the map $g$ and decomposition of $D_3$ and representations of the first iteration of arcs $Q_1$,$Q_2$, $Q_3$ and $Q_4$ with $\overline{g}$.} 
				~\label{fig_three_circles}
		\end{center}
	\end{figure}

	Similar to the definition of the map $\phi_{\epsilon}$ in the proof of Theorem~\ref{parametrised}, 
	we define 
	a piecewise uniform scaling map $\xi:G_3\to G_3$, satisfying the following conditions (see Figure~\ref{fig_three_circles}).
	\begin{enumerate} 
	\item Restricted to $S_0$, $\xi$ maps the arc from $a_1$ to $q_5$ 
	to the whole circle $S_0$, uniformly scaled and counterclockwise;
	it maps the arc from $q_5$ to $q_4$ to the the arc from $a_1$ to $q_3$ in a uniform scaling way; 
	it maps the arc from 
	$q_4$ to $a_1$ to the arc from $q_3$ to $a_1$, reversing the orientation.
	\item Restricted to the upper half circle of $S_1$, 
	$\xi$ maps the arc from $a_1$ to $q_1$ 
	to the upper arc from $a_1$ to $q_4$ in a uniform scaling way. 
	It maps the arc from $q_1$ to $q_6$ to the upper arc from $q_4$ to $a_1$, uniformly scaled. 
	It maps
	the arc from $q_6$ to $a_3$ to the upper arc from $a_1$ to $a_3$, uniformly scaled.
	\item Finally, restricted to the lower half circle of $S_1$ and restricted to $S_2$, the definition of $\xi$ 
	is given in a symmetric way. 
	We omit the precise description and refer to  Figure~\ref{fig_three_circles}.
	\end{enumerate}

	Let $R:G_3\to G_3$ be the rotation of $G_3$  by the angle 
	$\pi$ around the central point of the circle $S_1$. 
	Define $g: =R\circ \xi: G_3\to G_3$. 
	Observe that, 
	there exists a $4$-periodic orbit of $g$, 
	 namely, $\{q_1, q_2, q_3 , q_4\}$.
	
	 Recall that we denote 
	 by $\sigma_g$ the corresponding shift homeomorphism
	 for the inverse limit space $\underleftarrow{\mathrm{lim}}\{G_3 ,g\}$.
	 Similar to the case in 
	 the previous section, 
	 $G_3$ is a boundary retract of $D_3$. 
	  We define the smash mapping $\Upsilon$ and 
	  the unwrapping $\overline g$ in a much simpler way
	  than  in Section~\ref{lakes_wada}, in particular, here we only need to define one embedding, instead of 
	  a parametrised family and thus we just follow a construction from \cite{BM};
	  see Figure~\ref{fig_three_circles}. Now we can apply 
	 Lemma~\ref{BBM}, to obtain an extension 
	 $\psi: D_3\to D_3$ of 
	 the unwrapping $\overline g$, 
	 as well as a homeomorphism 
	 $\Phi=\sigma_{\psi}$ 
	 of $D_3$ (which is homeomorphic to $\underleftarrow{\mathrm{lim}}\{D_3,\psi\}$),
	  for which 
	 $K=\underleftarrow{\mathrm{lim}}\{G_3 ,g\}$
	 is an attractor.
	 $K$ is homeomorphic to a
	 Lakes of Wada continuum. 
	 The argument to show this follows exactly the lines of
	 the proof of item (iii) of Theorem~\ref{parametrised}. 
	 
	What remains to be checked 
	for Theorem~\ref{thm:noFP} is that $K$ 
	has no fixed points and that its 
	external prime ends rotation number is non-zero.

	\begin{claim}\label{claimnoFP}
		$\Phi |_K=\sigma_g$ has no fixed points. 
	\end{claim}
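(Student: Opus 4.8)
The plan is to show that the shift homeomorphism $\sigma_g$ has no fixed points by reducing the question to the dynamics of the bonding map $g$ on $G_3$. Recall that a point $\underbar x=(\ldots,x_{-1},x_0)\in \underleftarrow{\lim}\{G_3,g\}$ is fixed by $\sigma_g$ if and only if $g(x_0)=x_0$ and $x_{-i}=x_0$ for all $i\geq 0$; in other words, fixed points of $\sigma_g$ correspond exactly to fixed points of $g$ on $G_3$ (a fixed point of $g$ lifts to the unique constant backward orbit). Therefore it suffices to prove that $g=R\circ \xi:G_3\to G_3$ has no fixed point. First I would analyze the composition $R\circ \xi$ circle-by-circle: the rotation $R$ by angle $\pi$ about the center of $S_1$ interchanges $S_0$ with $S_2$ and sends the upper half of $S_1$ to the lower half and vice versa, so to locate a fixed point of $R\circ \xi$ one must find a point $p$ with $\xi(p)=R^{-1}(p)=R(p)$.

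The key step is to track where each piece of $G_3$ is sent. I would argue that since $R$ swaps $S_0$ and $S_2$, any fixed point $p$ of $g$ would have to satisfy $\xi(p)\in R(\{p\text{'s circle}\})$, and then rule out each case. Concretely, for $p\in S_0$ we need $\xi(p)\in S_2$; but by the definition of $\xi$ in item (1) above, $\xi(S_0)\subseteq S_0\cup S_1$, so $\xi(p)$ never lands in $S_2$ and no fixed point can occur there. A symmetric argument handles $p\in S_2$. The remaining and most delicate case is $p\in S_1$: here $R$ maps the upper half-circle to the lower half-circle, so a fixed point would force $\xi$ to send a point in the upper half of $S_1$ to the $R$-image of that same point, which lies in the lower half. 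One checks from the definition of $\xi$ on $S_1$ (items (2) and (3)) that $\xi$ preserves the upper/lower decomposition appropriately—$\xi$ maps the upper half of $S_1$ into the upper half of $G_3$—so that $\xi(p)$ and $R(p)$ always lie in opposite halves and hence cannot coincide. The two intersection points $a_1,a_3$ must be checked separately, but they are carried by $g$ into the $4$-periodic orbit pattern rather than being fixed.

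I expect the main obstacle to be the case $p\in S_1$, since that is where $\xi$ is defined piecewise with several scaling arcs and where $R$ acts nontrivially within the same circle. The verification that no point of $S_1$ is fixed requires carefully comparing the images of the constituent arcs under $\xi$ against their $R$-images, keeping precise track of orientations (note that some of the scaling pieces reverse orientation). The cleanest way to organize this is to use the upper/lower half decomposition of $S_1$ as an invariant bookkeeping device: establish that $g$ maps the upper half of $S_1$ off of itself (into a region disjoint from where a fixed point could sit), and symmetrically for the lower half, thereby excluding fixed points without having to solve explicit scaling equations. Once these cases are exhausted together with the two vertices $a_1,a_3$, we conclude $g$ is fixed-point free on $G_3$, and hence $\sigma_g=\Phi|_K$ is fixed-point free, proving the claim.
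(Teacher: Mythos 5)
Your proposal is correct and follows the same route as the paper: reduce the claim to showing that the bonding map $g=R\circ\xi$ has no fixed points (fixed points of $\sigma_g$ are exactly the constant backward orbits over fixed points of $g$), and then verify this from the definition of $g$. The paper dismisses the second step as obvious and cites a reference for the first, whereas you carry out the circle-by-circle case analysis explicitly; the only slip is cosmetic, namely that $a_1,a_3$ form a $2$-periodic orbit of $g$ rather than joining the $4$-periodic orbit $\{q_1,q_2,q_3,q_4\}$, which does not affect the conclusion.
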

	\begin{proof}[Proof of Claim \ref{claimnoFP}]
	To prove this it is enough to show that the map $g$ has no fixed points. 
	But this is obvious from the definition of $g$. 
	Therefore the induced homeomorphism $\sigma_g$ has no fixed points by \cite{Li} as well.
	\end{proof}
	
	\begin{claim}\label{claimRotation2}
		The external prime ends rotation number $\rho_{\text{ex}}(\Phi,K) \neq 0 \text{ mod } \Bbb Z$.
	\end{claim}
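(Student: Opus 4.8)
The plan is to show that $\rho_{\text{ex}}(\Phi,K)\neq 0 \text{ mod } \Bbb Z$ by exhibiting an accessible point of $K$ whose pointwise rotation number is nonzero, and then invoking Lemma~\ref{Luis_lemma}. Since the orbit $\{q_1,q_2,q_3,q_4\}$ is a $4$-periodic orbit for $g$, the most direct route is to verify that this periodic orbit gives rise to an accessible periodic point of the attractor $K$ whose exterior rotation number is nonzero modulo $\Bbb Z$. First I would identify the corresponding point $\underbar q:=(\ldots, q_3,q_4,q_1)\in K$ (and its images under $\sigma_g$), which is periodic of period $4$ for $\Phi|_K=\sigma_g$.

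The key step is to establish accessibility, following exactly the strategy of Lemma~\ref{claim1} and the subsequent construction in the proof of Theorem~\ref{parametrised}. For each $i$, I would let $Q_i\subset D_3$ denote the radial arc from $q_i$ to the outer boundary component of $D_3$, chosen so that $Q_i\cap G_3=\{q_i\}$. Because the unwrapping $\overline g$ is designed (as in Figure~\ref{fig_three_circles}) so that radial arcs over the periodic points map to radial arcs over their images, one checks that $\psi(Q_i)=Q_{i+1}$ (indices mod $4$). Consequently the inverse limit arc $\underbar Q:=(\ldots,Q_3,Q_4,Q_1)$ is an arc in $\underleftarrow{\lim}\{D_3,\psi\}\cong D_3$ whose intersection with $K$ is exactly $\{\underbar q\}$, with $\underbar Q\setminus\{\underbar q\}$ lying in the exterior complementary domain $L_{\text{out}}$. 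This shows $\underbar q$ is accessible from the exterior.

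Once accessibility of the periodic point $\underbar q$ is in hand, Lemma~\ref{Luis_lemma} (case (1), the periodic case) applies and yields that the lifted exterior prime ends rotation number equals the pointwise rotation number of $\underbar q$. Here the role of the rotation $R$ by angle $\pi$ is decisive: composing the piecewise scaling map $\xi$ with $R$ forces the combinatorial cycle $q_1\to q_2\to q_3\to q_4\to q_1$ to advance around the outer prime ends circle by a nonzero total amount over the period, so that a suitable lift $\widetilde\Phi$ gives $\widetilde\rho_{\text{ex}}(\widetilde\Phi,K)$ a nonzero value modulo $\Bbb Z$. Reducing modulo $\Bbb Z$ then gives $\rho_{\text{ex}}(\Phi,K)\neq 0 \text{ mod }\Bbb Z$, completing the proof.

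The main obstacle I anticipate is the bookkeeping needed to compute the exterior prime ends rotation number of the periodic orbit, that is, verifying that the cyclic combinatorics of $\{q_1,q_2,q_3,q_4\}$ together with the half-turn $R$ produce a genuinely nonzero winding on the prime ends circle rather than a net rotation of $0$. This requires tracking how the accessing arcs $Q_i$ wind around the outer boundary $C$ of $D_3$ under iteration of $\psi$, and confirming that the orientation-reversing and orientation-preserving pieces of $\xi$ do not cancel the contribution of $R$. The accessibility argument itself is routine once the radial-arc structure of $\overline g$ is fixed, so it is this rotation-number computation that carries the real content.
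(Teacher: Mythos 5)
Your proposal follows essentially the same route as the paper: exhibit the $4$-periodic orbit $\{q_1,q_2,q_3,q_4\}$ as accessible points of $K$ via radial arcs in the style of Lemma~\ref{claim1}, then apply Lemma~\ref{Luis_lemma} in the periodic case to identify the exterior prime ends rotation number with the pointwise rotation number of that orbit (the paper computes it to be $\tfrac{3}{4}\bmod\Bbb Z$). The remaining bookkeeping you flag is exactly what the paper also leaves largely to the figure, so there is no substantive difference in approach.
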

	\begin{proof}[Proof of Claim \ref{claimRotation2}]
	Note that we have already specified a 
	$4$-periodic orbit of $g$ being $\{q_1,q_2,q_2,q_4\}\subset G_3$. 
	Then we obtain elements in the inverse limit space, namely, 
	\begin{align}
	\underbar{q}_1:=(\ldots,q_2,q_3,q_4,q_1).\\
	\underbar{q}_1:=(\ldots,q_3,q_4,q_1,q_2).\\
	\underbar{q}_3:=(\ldots,q_4,q_1,q_2,q_3).\\
	\underbar{q}_4:=(\ldots,q_1,q_2,q_3,q_4). 
	\end{align} 
	These form a $4$-periodic orbit for $\Phi$ 
	in 
	$\underline{\lim}\{G_3,g\}$.
	We proceed as in the proof of Lemma~\ref{claim1}.
	By attaching arcs, we can show that 
	these four points are indeed 
	all accessible points of 
	the continuum $K$. Clearly, again by Lemma~\ref{Luis_lemma},
	the dynamics over this periodic orbit of accessible points 
	imply that the external prime ends rotation number $\rho_{\text{ex}}(\Phi,K)$
	is $\frac 34 \text{ mod } \Bbb Z$, which is nonzero. This concludes the proof of Claim \ref{claimRotation2}.
	\end{proof}
	The part of the statement about translation line 
	$\Gamma$ is argued in the same way as in the proof of 
	Theorem~\ref{parametrised}. Arguments concerning $n>2$ follow analogously as described above
	for $n=2$, working on $G_{2n-1}$ being the chain of $(2n-1)$-circles. 
\end{proof}

\section{Cofrontier Dynamics and Embeddings}\label{applications}
We include two more applications of the BBM technique in this section, 
by proving Theorem~\ref{pseudo-circles} and 
Theorem~\ref{NoEndPoint}.
\subsection{Conjecture of Walker revisited}\label{Walker}
As we have already mentioned in the introduction, in the spirit of Walker's conjecture, it is still a question if 
certain cofrontier dynamics can induce two different irrational prime ends rotation numbers 
in the two complementary domains. Walker's paper \cite{Walker} did not contain results with such properties. In this section, we answer this question affirmatively by proving 
Theorem~\ref{pseudo-circles}.
\begin{proof}[Proof of Theorem~\ref{pseudo-circles}]
Let us begin by defining a 
parametrised family
$\{ \widetilde {g}_t\}_{t \geq 0} \subset \widetilde {\text{End}}_1(\mathbb{S}^1)$
 as follows. 
 \begin{equation}
 \widetilde g_t(\widetilde x)=\widetilde x+\frac{t}{2 \pi}\sin(2\pi \widetilde x).
 \end{equation}
This is a reduced Arnold's family (see \cite{Boyland_1986_bifurcations} for more).
 It is known that when $t\geq t_\ast$ for some $t_\ast > 1$, the rotation set
$\rho(\widetilde g_t)$ is a nondegenerate interval. 
Note that this family consists of odd functions. 
It follows immediately that,
$\rho(\widetilde g_t)$ is of the form $[-\eta_t,\eta_t]$ for some $\eta_t>0$.
 On the other hand, 
these rotation intervals  change continuously (Lemma 3.1 of \cite{Boyland_1986_bifurcations}).
Clearly, we can choose $I_\ast=[t_\ast,t_{\ast}']$, such that $\eta_{t_{\ast}'}>\eta_{t_\ast}$.

In what follows, we will work with $D_1$ which is the closed annulus. The round circle $G_1=\Bbb S^1$
can be regarded as the spine of $D_1$.

The main idea is to again apply Lemma~\ref{BBM} to study appropriate inverse limit spaces. 
Denote the outer boundary and the inner boundary of $D_1$ by $C_1$ and $C_0$, respectively. 
Therefore, we obtain the radial decomposition $\alpha: C_0\bigcup C_1\times [0,1] \to D_1$ as before. 
Note that, for all $t\in I_{\ast}$, the bonding map $g_t$ is a circle endomorphism with two turns. 

Define the smash mapping $\Upsilon_t$
as in \cite{BCH}.
Then we define the unwrapping
$\overline g_t$, similar to 
the definition of 
$\overline{\phi}_\epsilon$ in
Definition~\ref{overlinephi_definition}; see Figure~\ref{unwrapping_definition}. More precisely, the definition of the unwrapping $\overline g_t$ is given as follows (c.f. Definition~\ref{overlinephi_definition}).

\begin{enumerate}
	\item $\overline g_t$ is a near-homeomorphism. $\overline g_t$ restricted to $G_1$ is a homeomorphism onto its image.
	\item $\Upsilon_t\circ \overline g_t\big|_{G_1}= g_t$.
	\item there exists an interval $J_{\text{out},t} \subset G_1$, which corresponds to 
	the efficient climbing interval of $g_t$, such that if for any $y \in J_{\text{out},t}$ we
	denote $x\in C_1$ such that $\alpha(x,1) =y$, then 
	$\overline g_t$ restricted to the radial arc $\alpha(x,[0,1])$
	is a monotone map, whose image is contained in the radial arc connecting $g_t(y)$ to 
	the corresponding point in $C_1$.
	\item there exists an interval $J_{\text{in},t}\subset G_1$, which corresponds to 
	the lower climbing interval of $g_t$, 
	such that if for any $y'\in J_{\text{in},t}$ we denote $x'\in C_0$ with $\alpha(x',1) =y'$, then
	$\overline g_t$ restricted to the radial arc $\alpha(x',[0,1])$
	is a monotone map, whose image is contained in the radial arc connecting 
	$g_t(y')$ to the corresponding point in $C_0$.
\end{enumerate}

The full details are left to the reader because of analogy with the proof of Theorem~\ref{parametrised}.
We proceed to 
define the homeomorphism $\psi_t$ which extends $g_t$ to $D_1$, as well as the shift homeomorphism
$\Phi_t=\sigma_{\psi_t}$, 
exactly the same way as in Step 2 of the proof of Theorem~\ref{parametrised}. 
By Lemma~\ref{BBM}, for each $t$, 
we obtain the inverse limit space 
$\underleftarrow{\lim}\{D_1, \psi_t\}$, 
which is homeomorphic to $D_1$.
Moreover, the inverse limit space $\underleftarrow{\lim}\{G_1,g_t\}$
is homeomorphic to $K_t$. 

Now, each
inverse limit space $K_t$
is a circle-like continuum by definition. 
It follows that every proper 
subcontinuum of $K_t$ is 
chainable and thus non-separating. 
Since clearly $K_t$ has empty interior, it follows 
$K_t$ is a cofrontier with two complementary domains for each $t\in I_{\ast}$. To proceed we need the following claim.

\begin{claim}\label{Accessibility_for_Walker}
Suppose that
$\{\Phi_t\}_{t\in I_\ast}$ and
$\{K_t\}_{t\in I_\ast}$ are as above. 
Then for any $t \in I_\ast$ the following properties hold.
\begin{enumerate}[(a)]
\item There exists an accessible point $\underbar p_{\text{ex}}\in K_t$ (respectively, 
$\underbar p_{\text{in}} \in K_t$), corresponding to some point $p_{\text{ex}}\in G_1$ (respectively, $p_{\text{in}}\in G_1$),
whose  forward iterates (respectively, backward iterates) rotation number
is equal to the upper endpoint of the rotation interval, 
$\sup \rho(\widetilde {g}_t)$
(respectively, to the lower endpoint $\inf \rho(\widetilde{g}_t)$). 
\item For a certain lift $\widetilde \Phi_t$, 
the lifted external prime ends rotation number $\widetilde{\rho}_{\text{ex}}(\widetilde \Phi_t,K_t)$ equals 
$\sup \rho(\widetilde {g}_t)$ 
and lifted internal prime ends rotation number $\widetilde{\rho}_{\text{in}}(\widetilde \Phi_t,K_t)$ equals 
$\inf \rho(\widetilde {g}_t)$. 
\item The attractor 
$K_t$ is an indecomposable continuum.
\end{enumerate}
\end{claim}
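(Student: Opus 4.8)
The plan is to treat parts (a) and (b) as direct annulus analogues of Step 3 in the proof of Theorem~\ref{parametrised}, and to isolate (c) as the genuinely new ingredient. For (a), I would first apply Lemma~\ref{one_turn} to the two-turn endomorphism $g_t$: it produces a point $p_{\text{ex}}\in G_1$ lying in the efficient climbing interval $J_{\text{out},t}$, admitting a choice of backward iterates $\{p_{-k}\}$ all contained in integer translates of $J_{\text{out},t}$, and whose forward rotation number agrees with its backward rotation number, both equal to $\sup\rho(\widetilde g_t)=\eta_t$. By Remark~\ref{climbing_properly_parellel}, the symmetric statement applied to the lower climbing interval $J_{\text{in},t}$ yields a point $p_{\text{in}}\in G_1$ whose forward and backward rotation numbers both equal $\inf\rho(\widetilde g_t)=-\eta_t$. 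I then lift these to $\underbar p_{\text{ex}},\underbar p_{\text{in}}\in K_t$ and attach nested radial arcs exactly as in Theorem~\ref{parametrised}: by condition (3) in the definition of $\overline g_t$, each radial arc $Q_{-k}$ over $p_{-k}\in J_{\text{out},t}$ is mapped by $\psi_t$ near-homeomorphically onto $Q_{-(k-1)}$ with image reaching the outer boundary $C_1$, so the inverse-limit arc $\underbar Q_{\text{ex}}=(\ldots,Q_{-2},Q_{-1},Q_{\text{ex}})$ meets $K_t$ only at $\underbar p_{\text{ex}}$ and otherwise lies in the outer complementary domain; this exhibits $\underbar p_{\text{ex}}$ as accessible from the exterior. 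Condition (4), which steers the relevant radial arcs toward the inner boundary $C_0$, gives in the same way that $\underbar p_{\text{in}}$ is accessible from the interior.

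For (b), observe that each of $\underbar p_{\text{ex}}$ and $\underbar p_{\text{in}}$ satisfies hypothesis (2) of Lemma~\ref{Luis_lemma}, since forward and backward rotation numbers coincide. Applying Lemma~\ref{Luis_lemma} to the accessible point $\underbar p_{\text{ex}}$ shows that the lifted exterior prime ends rotation number $\widetilde\rho_{\text{ex}}(\widetilde\Phi_t,K_t)$ equals the pointwise rotation number of $\underbar p_{\text{ex}}$, namely $\sup\rho(\widetilde g_t)$. For the interior I would apply the interior counterpart of Lemma~\ref{Luis_lemma}, obtained by regarding the inner complementary domain as a disk whose boundary is $K_t$ and reversing orientation, to the accessible point $\underbar p_{\text{in}}$, obtaining $\widetilde\rho_{\text{in}}(\widetilde\Phi_t,K_t)=\inf\rho(\widetilde g_t)$. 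Fixing a single lift $\widetilde\Phi_t$ compatible with both computations completes (b).

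For (c), I would use that $K_t=\underleftarrow{\lim}\{G_1,g_t\}$ is circle-like, together with the classical dichotomy that a circle-like continuum is either indecomposable or admits a monotone essential map $m$ onto $\mathbb{S}^1$ whose fibers are chainable. Suppose $K_t$ were decomposable; then such an $m$ would semiconjugate the annulus homeomorphism $\Phi_t|_{K_t}$ to an orientation-preserving degree-one circle homeomorphism $h$. Since $m$ is essential, the annulus rotation number of any $\underbar x\in K_t$ would equal the unique rotation number of $h$, forcing every point of $K_t$ to share one and the same rotation number. This contradicts parts (a)--(b), which produced points $\underbar p_{\text{ex}},\underbar p_{\text{in}}\in K_t$ with genuine (forward $=$ backward) rotation numbers $\eta_t$ and $-\eta_t$, distinct because $\rho(\widetilde g_t)=[-\eta_t,\eta_t]$ is nondegenerate for $t\in I_\ast$. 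Hence $K_t$ must be indecomposable.

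The routine content lies in (a) and (b), which merely re-run the accessibility mechanism of Theorem~\ref{parametrised} on the annulus, once with the efficient climbing interval (exterior) and once with the lower climbing interval (interior). I expect the main obstacle to be (c): invoking the structural dichotomy for circle-like continua in precisely the form needed and verifying that the monotone quotient is essential, so that rotation numbers transfer faithfully and the single-rotation-number rigidity bites. A secondary point requiring care is the interior version of Lemma~\ref{Luis_lemma}, since the cited results are stated for accessibility from the exterior; one must confirm that the orientation-reversed statement holds verbatim for the bounded complementary domain.
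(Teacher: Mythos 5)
Your parts (a) and (b) follow the paper's proof essentially verbatim: Lemma~\ref{one_turn} applied to the efficient climbing interval (and, via Remark~\ref{climbing_properly_parellel}, to the lower climbing interval), the nested radial arcs $\underbar Q=(\ldots,Q_{-1},Q_0)$ meeting $K_t$ only at the accessible point, and then Lemma~\ref{Luis_lemma} to identify the lifted prime ends rotation numbers with $\pm\eta_t$; you are in fact more careful than the paper in flagging that the interior case needs the orientation-reversed version of Lemma~\ref{Luis_lemma}, which the paper dismisses with ``the situation for the interior prime ends rotation number is similar.'' Part (c) is where you genuinely diverge: the paper simply cites Theorem 2.7 of \cite{BG_periodicity} (Barge--Gillette), using that the rotation set is a nondegenerate interval, whereas you sketch a proof of that result from scratch via the dichotomy ``decomposable $\Rightarrow$ monotone essential quotient onto $\mathbb{S}^1$'' and the rigidity of rotation numbers for circle homeomorphisms. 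Your argument is essentially the standard proof of the cited theorem and is sound, but note one imprecision: the dichotomy as you state it fails for general circle-like continua (an arc is circle-like, decomposable, and admits no essential map onto the circle); the correct hypothesis is that $K_t$ is a \emph{cofrontier}, which the paper establishes just before the Claim, and for decomposable cofrontiers the canonical monotone decomposition onto $\mathbb{S}^1$ is invariant under $\Phi_t$, so the semiconjugacy and the transfer of rotation numbers do go through. The trade-off is the usual one: the citation is shorter, while your route makes visible exactly which structural fact about cofrontiers forces indecomposability from a nondegenerate rotation interval.
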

	\begin{remark}The proof is a simplified 
	version of a part of the proof of 
	Theorem~\ref{parametrised}, with a few variations. 
	So we only sketch the main points and stress the differences.
	\end{remark}
\begin{proof}[Proof of Claim~\ref{Accessibility_for_Walker}]
In item (a), for the efficient climbing interval  $J_{\text{out},t}$, we can apply Lemma~\ref{one_turn} to find some point $p_{\text{ex}}\in J_{\text{out},t}$, 
and choices of 
its backward iterates, $\{p_{-\ell}\}_{\ell\geq 1}\subset J_{\text{out},t}$, such that 
the backward rotation number of $p_{\text{ex}}$ and the forward rotation 
number of $p_{\text{ex}}$ coincide and equal to the upper endpoint of 
the rotation interval $\eta_t=\sup \rho(\widetilde g_t)$.
Then we denote $\underbar p:=(\ldots,p_{-2},p_{-1},p_{\text{ex}})\in K_t \simeq \underleftarrow{\lim}(G_1, g_t)$.

Now, similar to what we did in the proof of Theorem~\ref{parametrised},
one can denote $Q_0$ for the radial arc connecting $p_{\text{ex}}$ to some point in 
the outer boundary $C_1$ 
of $D_1$, and define $Q_{-\ell}$ to be the radial subarc of the arc connecting $p_{-\ell}$ to the corresponding 
point in $C_1$ so that $\psi_t(Q_{-\ell})=Q_{-(\ell-1)}$ for all $\ell\geq 1$.
It follows that the inverse limit set
$\underbar Q:= (\ldots,Q_{-1},Q_0)\subset \underleftarrow {\lim}\{D_1, \psi_t\}$
is an arc, and $\underbar Q\bigcap K_t=\{\underbar p\}$. 
Therefore, $\underbar p$ is an accessible point of $K_t$
from the exterior domain by the arc $\underbar Q$. 
This completes half of the proof of item (a) of Claim~\ref{Accessibility_for_Walker}. 
For the other part, we use the definition of lower climbing interval. 
In a similar way, one shows that there is point $p_{\text{in}}\in G_1$, whose forward rotation number and the backward rotation number coincide and equal $-\eta_t=\inf \rho(\widetilde g_t)$. 
Then we can obtain $\underbar p_{\text{in}}$ in $K_t$, which is accessible by an arc 
from interior of the annulus. Item (a) is thus complete.

Now we check item (b). 
Observe that, 
by the choice of the point $p_{\text{ex}}$, 
the forward and backward rotation numbers of $p_{\text{ex}}$ coincide. 
Then by Lemma~\ref{Luis_lemma}, 
this number is equal to 
the lifted exterior prime ends rotation number 
$\widetilde{\rho}_{\text{ex}}(\widetilde \Phi_t,K_t)$, for a proper lift. 
Then by item (a), $\widetilde{\rho}_{\text{ex}}(\widetilde \Phi_t,K_t)$ is equal to $\eta_t$. 
The situation for the
interior prime ends rotation number is similar.

For item (c), recall  that the rotation set $\rho(\widetilde g_t)$ is a non-degenerate segment.
By Theorem 2.7 of~\cite{BG_periodicity}, 
$K_t$ is an indecomposable continuum for every $t\in I_{\ast}$. 
\end{proof}

We are now ready to finish the proof of Theorem~\ref{pseudo-circles}.
By  
Claim~\ref{Accessibility_for_Walker}, for $t\in I_\ast$, 
the rotation set 
$\rho(\widetilde \Phi_t\big|_{K_t})$ is a non-trivial segment. 
By definition, item (i) is proved, namely $K_t$ are indeed Birkhoff-like attractors. 
 
To show item (ii), note that for any $t\in I_\ast$, 
the circle map $g_t$ is topologically exact. Then by
Theorem 22 in~\cite{KOT}, 
for any $\epsilon>0$, 
there is another map $g_t'$, 
with $d_{C^0}(g_t,g'_t)<\epsilon$, 
such that the
inverse limit $\underleftarrow{\lim}\{G_1,g_t'\}$ 
is the pseudo-circle (see Theorem 3.2 of~\cite{BO} where this argument was applied as well). 
Therefore, we can apply a similar 
construction as we did in Theorem~\ref{parametrised} with a properly chosen 
smash function and the unwrapping. Therefore, 
the invariant pseudo-circle Birkhoff-like attractor $K'_t$ can be embedded 
such that $d_H(K'_t, K_t)< \epsilon$.
 This shows item (ii).
  
To show item (iii), 
observe that when $t$ varies in $I_\ast$ 
the rotation set $\rho(\widetilde g_t)$
changes continuously in a strictly monotone way from $[-\eta_{t_\ast},\eta_{t_\ast}]$ to $[-\eta_{t_\ast'},\eta_{t_\ast'}]$, with $\eta_{t_\ast'}>\eta_{t_\ast}$.
It follows that, there are uncountably many parameters $t\in I_\ast$ for which $\eta_t\notin \Bbb Q$.
In particular, for those choices of $t$, the two lifted prime ends rotation numbers are different and both are irrationals. 
\end{proof}

In the proof of this theorem, we thus already showed the following statement.
\begin{corollary} For a circle endomorphism with two turns, we can embed the inverse limit space such that,
the exterior prime ends rotation number and the interior prime ends rotation number equal the 
two endpoints of the rotation interval of the shift homeomorphism restricted to it. 
\end{corollary}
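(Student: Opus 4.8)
The plan is to extract the statement directly from the construction underlying Claim~\ref{Accessibility_for_Walker}, now applied to a single arbitrary endomorphism rather than to the reduced Arnold family. Let $f$ be a circle endomorphism with two turns and fix a lift $\widetilde f$. By the main result of~\cite{Ito_rotation} the rotation set $\rho(\widetilde f)=[a,b]$ is a closed interval, and by Proposition 2.3 of~\cite{Boyland_1986_bifurcations} every value in it is realised as a pointwise rotation number. Following the notation of Section~\ref{endo_rotation_lemmas}, I would first identify the efficient climbing interval $\widetilde J$ and the lower climbing interval of $\widetilde f$; these encode, respectively, the points whose orbits climb at maximal and minimal speed.

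First I would set up the BBM embedding into the closed annulus $D_1$ exactly as in the proof of Theorem~\ref{pseudo-circles}, taking $G_1=\mathbb{S}^1$ as the spine of $D_1$ and equipping $D_1$ with its radial decomposition $\alpha$. I would define the smash mapping as in~\cite{BCH} and choose the unwrapping $\overline f$ to be a near-homeomorphism satisfying the analogue of the four conditions appearing in the proof of Theorem~\ref{pseudo-circles} (c.f.\ Definition~\ref{overlinephi_definition}): in particular $\overline f|_{G_1}$ is a homeomorphism onto its image with $\Upsilon\circ\overline f|_{G_1}=f$, and the radial arcs sitting over the efficient climbing interval $J_{\text{out}}$ (respectively over the lower climbing interval $J_{\text{in}}$) are mapped monotonically into radial arcs terminating on the outer boundary $C_1$ (respectively the inner boundary $C_0$). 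Applying Lemma~\ref{BBM} then yields a homeomorphism $\psi$ extending the unwrapping, the shift homeomorphism $\Phi=\sigma_\psi$, and an invariant attractor $K$ conjugate to $\sigma_f$ on $\underleftarrow{\lim}\{G_1,f\}$.

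Next I would carry out the accessibility argument on both sides. Applying Lemma~\ref{one_turn} to $\widetilde J$ produces a point $p_{\text{ex}}$ whose forward and backward rotation numbers coincide and equal $\sup\rho(\widetilde f)=b$; building the nested radial arcs $\underbar Q=(\ldots,Q_{-1},Q_0)$ over its chosen backward iterates gives an arc in $\underleftarrow{\lim}\{D_1,\psi\}$ meeting $K$ only in the corresponding point $\underbar p_{\text{ex}}$, so that $\underbar p_{\text{ex}}$ is accessible from the exterior domain. The symmetric statement for the lower climbing interval (the analogue of Lemma~\ref{one_turn} noted in Remark~\ref{climbing_properly_parellel}) produces $\underbar p_{\text{in}}$, accessible from the interior domain, whose rotation number is $\inf\rho(\widetilde f)=a$. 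Finally, since at each of these two points the forward and backward rotation numbers agree, Lemma~\ref{Luis_lemma} identifies the lifted exterior and interior prime ends rotation numbers with the pointwise rotation numbers of $p_{\text{ex}}$ and $p_{\text{in}}$, giving $\widetilde\rho_{\text{ex}}(\widetilde\Phi,K)=b$ and $\widetilde\rho_{\text{in}}(\widetilde\Phi,K)=a$, which are precisely the two endpoints of $\rho(\widetilde f)$.

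The main obstacle is the same delicate point as in Theorem~\ref{parametrised}: one must choose a single unwrapping that simultaneously sends the radial arcs over both climbing intervals to radial arcs on the appropriate boundary component, while remaining a genuine near-homeomorphism of $D_1$; this is exactly what makes the two accessibility computations valid at once. Once that embedding is in place, everything else is a direct application of Lemma~\ref{one_turn}, Remark~\ref{climbing_properly_parellel} and Lemma~\ref{Luis_lemma}, with no continuity-in-parameter considerations needed, since only one endomorphism is involved.
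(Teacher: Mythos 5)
Your proposal is correct and follows essentially the same route as the paper: the authors prove the corollary by observing that it is already contained in Claim~6.2 (items (a) and (b)) of the proof of Theorem~\ref{pseudo-circles}, and your argument simply replays that construction---efficient and lower climbing intervals, the BBM unwrapping with radial arcs over $J_{\text{out}}$ and $J_{\text{in}}$, Lemma~\ref{one_turn} together with Remark~\ref{climbing_properly_parellel}, and Lemma~\ref{Luis_lemma}---for an arbitrary two-turn endomorphism rather than the reduced Arnold family.
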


\subsection{Prime ends rotation number realising 
an interior point of the rotation interval}

In this subsection, we prove 
Theorem~\ref{NoEndPoint}. The proof is much simpler and does not require parametrised family of
inverse limit embeddings. 
In fact, we only give one circle endomorphism and we will exhibit two different embeddings 
by drawing the graph of the unwrapping. 
	\begin{figure}[!ht]
		\begin{center}
			\includegraphics[width=12cm]{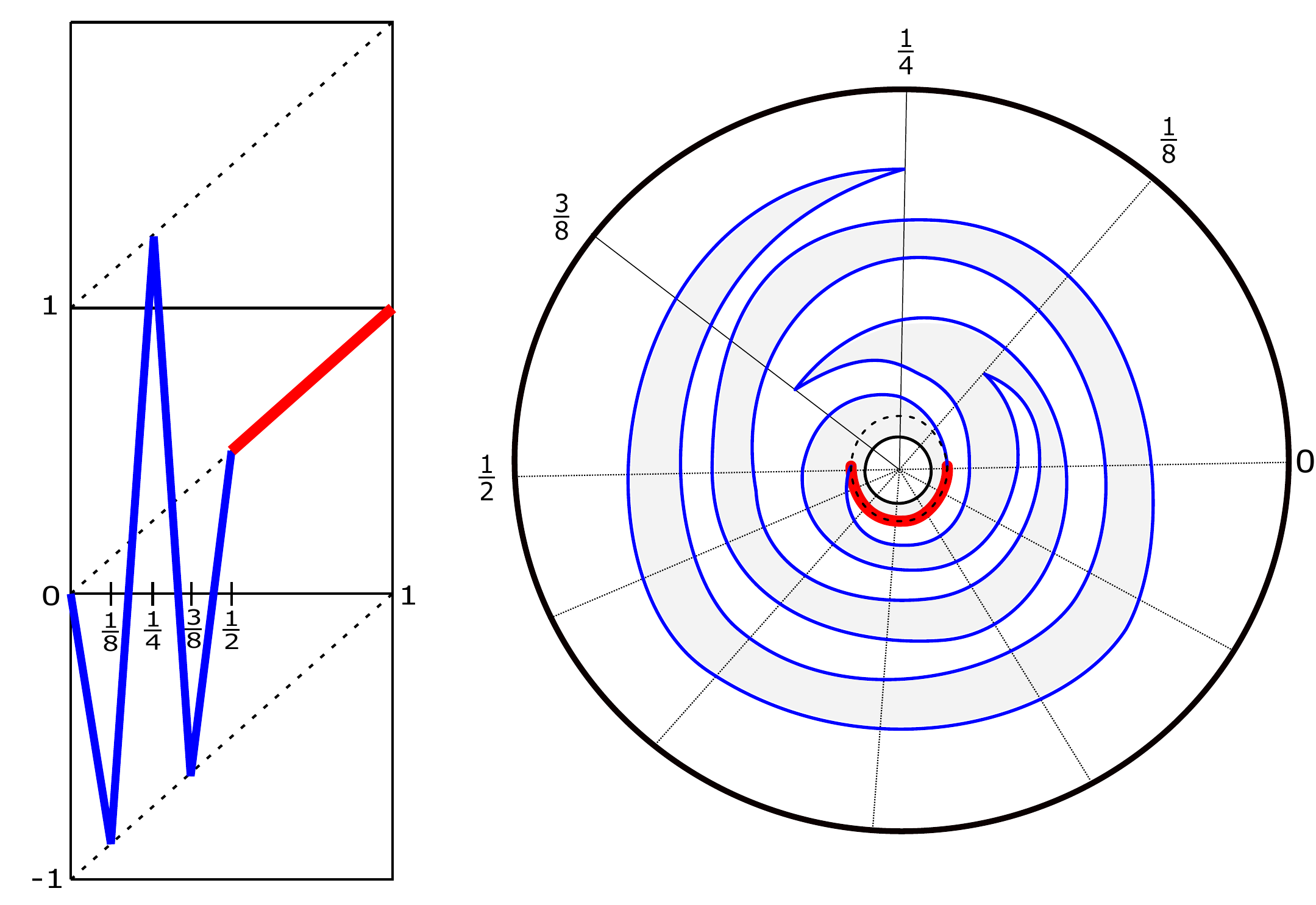}
			\caption{On the left, it is the graph of a lift $\widetilde g$ of the bonding map $g$, 
			 whose rotation set $\rho(\widetilde g)$ is $[-1,1]$. 
			 On the right, it is the graph of the unwrapping of $g$, 
			which eventually gives an arc of fixed point in the confontier, that are accessible from interior.} ~\label{fig_prime_neq_rotation}
		\end{center}
	\end{figure}
\begin{proof}[Proof of Theorem~\ref{NoEndPoint}] 
Similar to the arguments we have seen in previous sections, 
we construct 
the attractor as an inverse limit of a continuous circle endomorphism $g$. 
However, for our purpose here, the circle map $g$ is not an endomorphism with two turns. 
By definition, $g$ is affine restricted to the  five consecutive 
subintervals 
splitting the circle (see left of Figure~\ref{fig_prime_neq_rotation}).  
Then we define the unwrapping $\overline g$ as depicted on the 
right of 
Figure~\ref{fig_prime_neq_rotation}. 
We omit the precise definitions because they are much simpler 
than what we did in previous sections. 
Then, like in the proof of Theorem \ref{thm:noFP}, we can define the smash mapping $\Upsilon$ preserving radial segments, 
as well as the 
near-homeomorphism $\psi$ which extends $g$ to $D_1$.

Then we invoke Lemma~\ref{BBM}, to obtain 
the inverse limit space $\underleftarrow{\lim}\{D_1,\psi\}$, which 
is homeomorphic to $D_1$, 
and obtain 
the shift homeomorphism $\Phi=\sigma_\psi$ 
on $\underleftarrow{\lim}\{D_1,\psi\}$,
with 
the attractor $K=\underleftarrow{\lim}\{ \Bbb S^1,g\}$, where $\Bbb S^1$ is the circle.
 As we have already argued in 
the previous subsection, 
$K$ is a cofrontier.

Note that the lift $\widetilde g$ at point $1/8$ and $3/8$
has rotation number equal to $-1$, and at point $1/4$ has rotation number 
$1$. So the rotation interval $\rho(\widetilde g)$ is non-trivial (in fact it is equal to $[-1,1]$ by more detailed analysis of the map $g$ which we omit here).  
Moreover, by the relation (4.1) in~\cite{BCH},
we have that $\rho(\widetilde \Phi,K)=\rho(\widetilde g)=[-1,1]$ 
for proper choice of lifts. 

On the other hand, note that 
$\widetilde g$ point-wise fixes the arc $[1/2,1]$, 
where $\widetilde g$ has rotation number $0$. 
Applying similar arguments as in the proof of Lemma~\ref{claim1},
we  show that there is a semi-circle of fixed points that are accessible from interior. 
By Lemma~\ref{Luis_lemma} we conclude that the lifted interior prime ends rotation 
number $\rho_{\text{in}}(\widetilde{\Phi},K)$ equals $0$. Similar considerations show that the 
lifted exterior prime ends rotation number $\rho_{\text{ex}}(\widetilde{\Phi},K)$ equals $1$. 

It remains to be shown that the shift homeomorphism on $K$ (i.e., 
the restriction of $\Phi$ on $K$)
is not transitive. 
This follows from the fact that 
$g$ is not transitive, because it contains an arc of fixed points, 
and from the fact that 
$\omega((\ldots,x_2,x_1),\sigma_g)=\underleftarrow{\mathrm{lim}}\{\omega(x_1,g),g\}$ 
for any $x_1\in \mathbb{S}^1$ (see for example \cite{Li}). 

For the last assertion of the theorem, 
we will consider a different embedding of 
$\underleftarrow{\mathrm{lim}}(\mathbb{S}^1,g)$
 that makes the arc of fixed points inaccessible.
 The idea is to define the unwrapping in a different way. 
 Let us skip the precise definition 
 and only refer to Figure~\ref{fig_inaccessible} below for the graph of the unwrapping. 
 Then following the same argument invoking Lemma~\ref{BBM}, 
 one obtains the cofrontier attractor $K'$, which 
 is homeomorphic to $K$ by construction. 
 The proof of Theorem~\ref{NoEndPoint} is now completed. 
 \end{proof}
 \begin{figure}[!ht]
		\begin{center}
			\includegraphics[width=12cm]{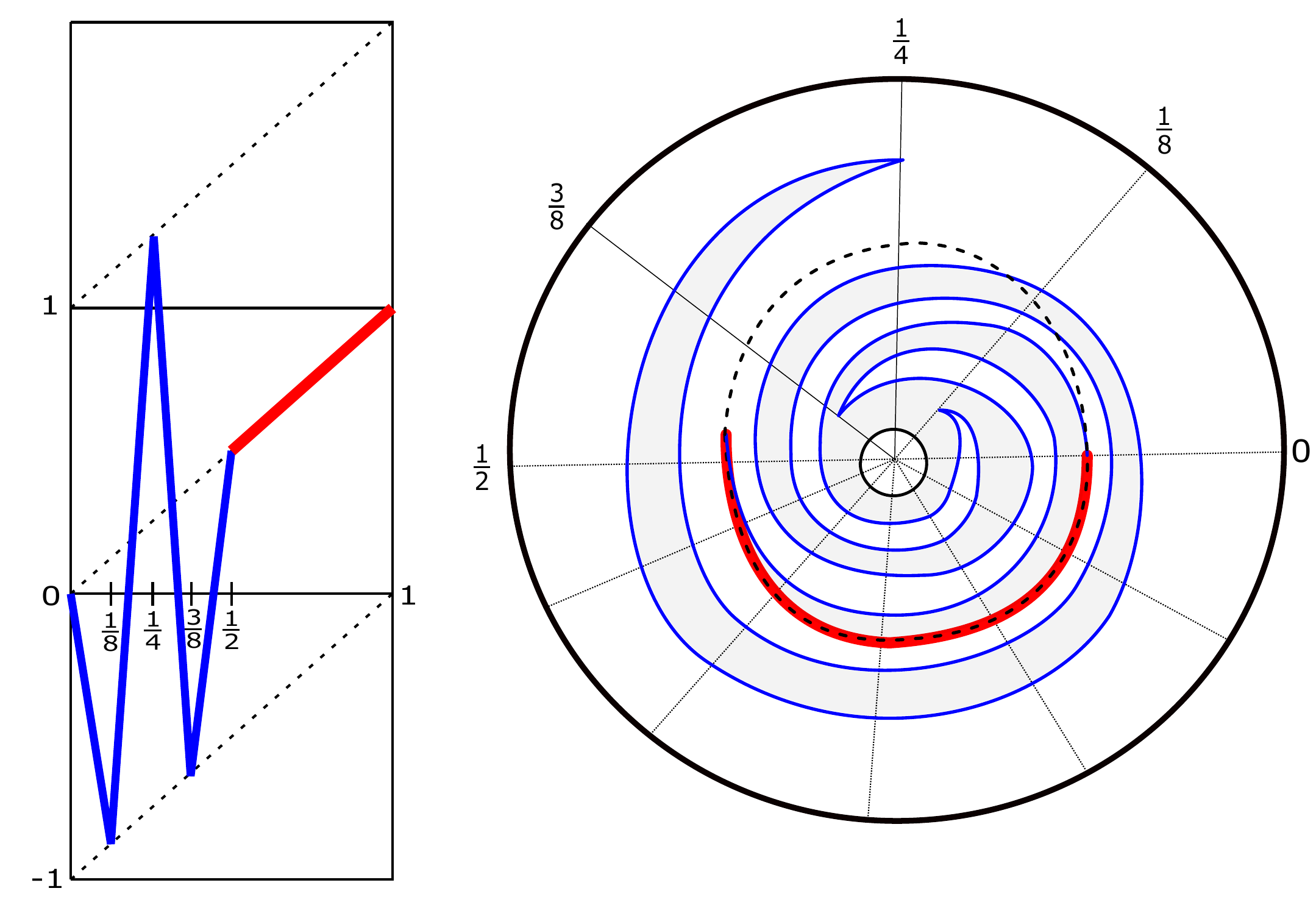}
			\caption{With the same bonding map on the left, 
			on the right, we draw the graph of a different unwrapping. 
			This provides a BBM embedding which has an arc of fixed points that are inaccessible.} ~\label{fig_inaccessible}
		\end{center}
	\end{figure}
\begin{remark}
	Note that we could (similarly as in previous sections) obtain a parametrised family of rotational attractors by modifying the map $\widetilde{g}$ and move three critical points in $(0,1/2)$ parallely with $y-axis$ uniformly towards the map $\widetilde{id}$. Embedding the spaces similarly as it is suggested on Figure~\ref{fig_prime_neq_rotation} we would obtain a parametrised family of examples answering the question of Boyland above (of course excluding the map $\widetilde{id}$). Choosing an appropriate smash similarly as in Theorem~\ref{smash} we can even obtain a parametrised family of such non-transitive examples arbitrary close to identity. 
\end{remark}

\begin{remark}
Two BBM embeddings are called equivalent if there is 
a conjugacy $h$ which restricted to the attractor is identity. 
 In Theorem 2.17 from \cite{Boy}, (based on previous results from \cite{Oversteegen_extend})
  the authors proved 
that any two BBM embeddings 
of unimodal inverse limit spaces are equivalent. In Theorem~\ref{NoEndPoint} we provide a contrast picture for circle-like 
continua as demonstrated with the two non-equivalent planar embeddings obtained from the BBM construction. 
\end{remark}

\section{Acknowledgements}
The first two authors are supported by University of Ostrava subsidy for institutional development IRP201824 “Complex topological
structures” and the NPU II project LQ1602 IT4 Innovations excellence in science. JB is grateful to Luis Hern\'andez-Corbato, Jose M.R. Sanjurjo and Francisco R. Ruiz del Portal for some useful conversations and hospitality during author's visit at Universidad Complutense de Madrid in October 2018. J\v C was also supported by FWF Schrödinger Fellowship stand-alone project J-4276.
X-C. Liu is supported by 
Fapesp P\'os-Doutorado grant (Grant Number
 2018/03762-2). 
 X-C. Liu thanks Ana Anu\v si\'c for useful conversations. We thank also P. Boyland, A. Koropecki and A. Passeggi for useful remarks, as well as Salvador Addas-Zanata and Fabio Armando Tal
 for valuable comments on Subsection 6.1, and for suggesting to consider reduced Arnold's family. 
 
\bibliographystyle{plain}
\bibliography{Rotational_attractor17}
\end{document}